\documentclass[11pt]{amsart}
\usepackage{amsfonts,latexsym,rawfonts,amsmath,amssymb,amsthm}
\usepackage[plainpages=false]{hyperref}
\usepackage{graphicx}

\RequirePackage{color}

\textwidth = 6.0 in
 \textheight = 8.5in
 \oddsidemargin = 0.30 in
\evensidemargin = 0.30 in
 \voffset=-30pt

\theoremstyle{plain}
  \newtheorem{theorem}{Theorem}[section]
  \newtheorem{lemma}{Lemma}[section]

  \newtheorem{definition}{Definition}[section]

\theoremstyle{remark} 
  \newtheorem{remark}{Remark}[section]


\renewcommand{\det}{\mbox{det}}
  
  \newcommand{\dist}{\mbox{dist}}
  \newcommand{\diam}{{\mbox{diam}}}


  \numberwithin{equation}{section}
  \numberwithin{figure}{section}


\parskip = 0.25in

\begin{document}

\title[Light reflection is nonlinear optimization]
{Light reflection is nonlinear optimization}


\author[Jiakun Liu]
{Jiakun Liu}

\address
	{Department of Mathematics,
	Princeton University,
	Fine Hall, Washington Road,
	Princeton, NJ 08544-1000, USA}
	
\email{jiakunl@math.princeton.edu}

\thanks{The author is supported by the Simons Foundation}
\thanks{\copyright 2011 by the author. All rights reserved}

\subjclass[2000]{35J60, 78A05;\, 90C30, 49N45}

\keywords{Nonlinear optimization, Monge-Amp\`ere equation}

\begin{abstract}
In this paper, we show that the near field reflector problem is a nonlinear optimization problem.
From the corresponding functional and constraint function, we derive the Monge-Amp\`ere type equation for such a problem.  
\end{abstract}

\maketitle

\baselineskip=16.4pt
\parskip=3pt

\section{Introduction}

Optimal transportation, due to its various applications, has been extensively studied in recent years. 
The modern theory of optimal transportation is mainly built upon Kantorovich's dual functional, which is a linear functional subject to a linear constraint. 
With his dual functional, Kantorovich introduced linear programming, which is a class of linear optimization problems. 
An important new application is the reflector design problem. 
In \cite{W04}, Xu-Jia Wang showed that the far field case of the reflector design problem is an optimal transportation problem, and so is a linear optimization problem. 
The purpose of this paper is to show that the general case of the reflector problem is a nonlinear optimization problem. 
More examples of nonlinear optimization problems and also questions of the existence and regularity of potential functions and optimal mappings will be investigated in \cite{Liu} and subsequent papers.

Suppose that a point source of light is centered at the origin $O$ and for each $X\in\Omega\subset\mathbb{S}^n$ we issue a ray from $O$ passing through $X$, which after reflection by a surface $\Gamma$ will illuminate a point $Y$ on the target surface $\Omega^*$ in $\mathbb{R}^{n+1}$. 
Let $f\in L^1(\Omega), g\in L^1(\Omega^*)$ be the input and gain densities, and $d\mu, d\nu$ denote the surface area elements of $\Omega, \Omega^*$, respectively. The near field reflector problem can be formulated as follows: given $(\Omega, f)$ and $(\Omega^*, g)$ satisfying the energy conservation condition
	\begin{equation}\label{e101}
		\int_\Omega fd\mu=\int_{\Omega^*} gd\nu,
	\end{equation}
find a reflector $\Gamma$ such that the light emitting from $\Omega$ with density $f$ is reflected off $\Gamma$ to the target $\Omega^*$ and the density of reflected light is equal to $g$.

In our reflector problem, we assume that both $\Omega$ and $\Omega^*$ are compact and each has boundary of measure zero.
Represent the reflector $\Gamma$ as a radial graph of function $\rho$, 
	\begin{equation}\label{e102}
		\Gamma=\{X\rho(X)\,:\,X\in\Omega\}.
	\end{equation}
Let $\mathcal{P}(\mu,\nu)$ be the set of measures on $\Omega\times\Omega^*$ with $\mu, \nu$ as their marginals. Let $\gamma\in\mathcal{P}(\mu,\nu)$. 
Denote by $C_+(\Omega)$ the set of positive continuous functions on $\Omega$. 
Define a functional
  \begin{equation}\label{e103}
   I(u,v)=\int_{\Omega\times\Omega^*} F(X,Y,u,v)d\gamma,
  \end{equation}
for $(u,v)\in C_+(\Omega)\times C_+(\Omega^*)$, where
  \begin{equation}\label{e104}
   F(X,Y,u,v)=f(X)\log u+g(Y)\left(\log v+\log (1-\frac{\langle X,Y\rangle}{v^{-1}+\sqrt{|Y|^2+v^{-2}}} )\right)
  \end{equation}
and $\langle\ , \rangle$ is the inner product in $\mathbb{R}^{n+1}$.
The main result is the following:

\begin{theorem}\label{t101}
Suppose that $f, g$ are two bounded positive functions on $\Omega, \Omega^*$, respectively, such that \eqref{e101} is satisfied. 
Suppose that $\Omega^*$ is contained in the cone $\mathcal{C}_V=\{tX\,:\,t>0,X\in V\}$ for a domain $V\subset\mathbb{S}^n$ and
	\begin{equation}\label{e105}
		\overline{\Omega}\cap\overline{V}=\emptyset,
	\end{equation}
where $\overline{\Omega}$ and $\overline{V}$ denote the closures of $\Omega$ and $V$, respectively.
Then there is a dual maximizing pair $(\rho,\eta)\in\mathcal{K}$, which satisfies
	\begin{equation*}
		I(\rho,\eta)=\sup_{(u,v)\in\mathcal{K}}I(u,v),
	\end{equation*} 
where $I(u,v)$ is given in \eqref{e103}--\eqref{e104}, and the constraint set $\mathcal{K}$ is given by
	\[\mathcal{K}=\left\{(u,v)\in C_+(\Omega)\times C_+(\Omega^*)\,:\,\phi(X,Y,u,v)\leq0\right\},\]
with the constraint function
	\begin{equation}\label{e106}
		\phi(X,Y,u,v)=\log u+\log v+\log\left(1-\frac{\langle X,Y\rangle}{v^{-1}+\sqrt{|Y|^2+v^{-2}}}\right).
	\end{equation}
Moreover, $\rho$ is a solution of the reflector problem with given densities $(\Omega, f)$ and $(\Omega^*, g)$.
\end{theorem}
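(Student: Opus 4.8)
The plan is to read the statement as a nonlinear analogue of Kantorovich duality: first produce a maximizer of $I$ over $\mathcal{K}$ by compactness, then show via a first--variation argument that the maximizing $\rho$ is precisely the radial graph whose reflection map sends $f\,d\mu$ to $g\,d\nu$. The point of departure is the geometric meaning of the constraint. Writing $h=h(X,Y,v):=\langle X,Y\rangle/(v^{-1}+\sqrt{|Y|^{2}+v^{-2}})$, a direct computation gives $\log v+\log(1-h)=-\log R(X;Y,v)$, where $R(\,\cdot\,;Y,v)$ is the radial function (pole at $O$) of the ellipsoid of revolution with foci $O$ and $Y$ whose polar radius in the directions orthogonal to $Y$ is $v^{-1}$; thus $F=f(X)\log u-g(Y)\log R(X;Y,v)$ and $\phi(X,Y,u,v)\le0$ is equivalent to $u(X)\le R(X;Y,v)$. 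The hypotheses $\Omega^{*}\subset\mathcal{C}_{V}$ and \eqref{e105} are precisely what makes $1-h$ lie between two positive constants, uniformly in $X\in\Omega$, $Y\in\Omega^{*}$, $v>0$: by Cauchy--Schwarz $|\langle X,Y\rangle|\le|Y|<v^{-1}+\sqrt{|Y|^{2}+v^{-2}}$, and the equality case $X=Y/|Y|$ is excluded by $\overline{\Omega}\cap\overline{V}=\emptyset$. Hence $F$ and $\phi$ are finite, and the caps $R(\,\cdot\,;Y,v)$ form a family with uniformly bounded $C^{0,1}(\Omega)$ norm over bounded ranges of $v$.

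\emph{Step 1: existence of a maximizer.} Since $f,g>0$ one has $\partial_{u}F=f(X)/u>0$, and $\partial_{v}F=g(Y)\,\partial_{v}(\log v+\log(1-h))>0$, the factor $\partial_{v}(\log v+\log(1-h))=v^{-1}(1-h)^{-1}\bigl(1-\langle X,Y\rangle(|Y|^{2}+v^{-2})^{-1/2}\bigr)$ being positive by the same Cauchy--Schwarz bound. Therefore, given a maximizing sequence in $\mathcal{K}$, we do not decrease $I$ by replacing $u$ with the largest admissible function $\mathcal{A}(v)(X):=\inf_{Y\in\Omega^{*}}R(X;Y,v(Y))$ and then $v$ with the largest $v$ compatible with the constraint, $\mathcal{B}(u)(Y):=\sup\{v>0:\,u(X)\le R(X;Y,v)\ \forall X\in\Omega\}$. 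Standard a priori estimates --- using that $f$ and $g$ are bounded and positive, so the logarithmic terms are coercive, together with the uniform ellipsoid geometry above --- confine the resulting sequence to a family of functions bounded above, bounded away from $0$, and equi-Lipschitz, so Arzel\`a--Ascoli yields a uniformly convergent subsequence with limit $(\rho,\eta)$. The constraint $\phi\le0$ passes to the uniform limit, so $(\rho,\eta)\in\mathcal{K}$, and $I$ is continuous along the sequence ($F$ is continuous and bounded on the relevant compact range of values, and $\gamma$ is a fixed finite measure); hence $I(\rho,\eta)=\sup_{\mathcal{K}}I$, which is the first assertion. Moreover, because $\partial_{u}F>0$ and $\partial_{v}F>0$, any maximizer automatically satisfies $\rho=\mathcal{A}(\eta)$ and $\eta=\mathcal{B}(\rho)$.

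\emph{Step 2: the maximizer solves the reflector problem.} For $X\in\Omega$ set $\Sigma(X)=\{Y\in\Omega^{*}:\phi(X,Y,\rho(X),\eta(Y))=0\}$, the set of confocal ellipsoids $R(\,\cdot\,;Y,\eta(Y))$ that touch the graph of $\rho$ at $X\rho(X)$ and lie outside it; $\Sigma(X)\neq\emptyset$ by compactness of $\Omega^{*}$ and $\rho=\mathcal{A}(\eta)$. As $\rho$ is Lipschitz, it is differentiable $\mu$-a.e., and at a point of differentiability $\nabla\rho(X)$ determines the supporting ellipsoid uniquely, so $\Sigma(X)=\{T(X)\}$ for $\mu$-a.e.\ $X$. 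The map $T$ is the reflection map of $\Gamma=\{X\rho(X):X\in\Omega\}$: at $X\rho(X)$ the reflector is internally tangent to the ellipsoid with foci $O$ and $T(X)$, which by the focal property of conics reflects the ray $OX$ to $T(X)$. It remains to prove $T_{\#}(f\,d\mu)=g\,d\nu$. Suppose not: there is a closed set $A\subset\Omega^{*}$ with $\int_{A}g\,d\nu>(T_{\#}(f\,d\mu))(A)$. Perturb by $\eta_{\varepsilon}=\eta+\varepsilon\chi$ with $\chi\ge0$ supported in a small neighbourhood of $A$, and set $\rho_{\varepsilon}=\mathcal{A}(\eta_{\varepsilon})\le\rho$. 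By the envelope theorem the first variation of $\log\rho_{\varepsilon}$ is supported where the active supporting ellipsoid has focus in $\supp\chi$, essentially on $T^{-1}(A)$, and differentiating the contact relation $\phi=0$ shows that this first-order decrease matches the first-order increase of $\log v+\log(1-h)$ at the corresponding $Y$. Comparing the two first-order contributions to $I=\int F\,d\gamma$ --- the loss $\int f(X)\,(\partial_{\varepsilon}\log\rho_{\varepsilon})\,d\mu$, carried by $T^{-1}(A)$, against the gain $\int g(Y)\,\partial_{\varepsilon}(\log v+\log(1-h))\,d\gamma$, driven by the $\nu$-mass of $A$ --- yields $\tfrac{d}{d\varepsilon}I(\rho_{\varepsilon},\eta_{\varepsilon})>0$ at $\varepsilon=0^{+}$, contradicting maximality. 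This one-sided inequality, together with \eqref{e101} and $T_{\#}(f\,d\mu)(\Omega^{*})=\int_{\Omega}f\,d\mu$, forces $T_{\#}(f\,d\mu)=g\,d\nu$; that is, $\Gamma$ reflects the incident light of density $f$ onto $\Omega^{*}$ with density exactly $g$, which is the reflector problem.

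\emph{Main difficulty.} The crux is Step 2. Unlike the linear far-field case, the constraint $\phi\le0$ genuinely couples $u$ and $v$, and the cross term $g(Y)\log(1-h)$ in $I$ does not separate into integrals against the marginals of $\gamma$; admissible perturbations must therefore be carried out along the transform $\mathcal{A}$, and one has to justify carefully (i) the $\mu$-a.e.\ single-valuedness of the contact map $T$, (ii) the envelope-theorem formula for the first variation of $\mathcal{A}$, and (iii) the exact cancellation that makes the two first-order contributions comparable. Once the transport identity is in hand, differentiating the contact relations $\phi(X,T(X),\rho(X),\eta(T(X)))=0$ and $\nabla_{X}\phi=0$ in $X$ expresses $\det DT$ through the Hessian of $\log\rho$, and substitution into the Jacobian identity $\det DT=f/g$ (up to the area elements of $\Omega$ and $\Omega^{*}$) produces the Monge--Amp\`ere type equation announced in the abstract.
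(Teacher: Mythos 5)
Your overall route is the same as the paper's: recast the constraint as ``$u$ lies below every confocal ellipsoid $R(\cdot;Y,v(Y))$'', replace a maximizing sequence by its double Legendre-type transform $(\mathcal{A}(v),\mathcal{B}(u))$, extract a limit by Arzel\`a--Ascoli, and then obtain $T_{\#}(f\,d\mu)=g\,d\nu$ from a first variation $\eta_\varepsilon=\eta+\varepsilon\chi$ using the envelope formula $\partial_\varepsilon\log\rho_\varepsilon=-(\phi_s/\phi_t)\,\chi(T(X))$. (The paper runs the variation two-sided, for arbitrary $h\in C(\Omega^*)$, which yields the transport identity directly as an equality rather than by contradiction; your one-sided version is equivalent in substance once you invoke equality of total masses.) Your computation of $\partial_v(\log v+\log(1-h))$ and its positivity via \eqref{e105} also matches the paper's verification of condition \eqref{e207}.

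There is, however, a genuine gap in Step 1. The claim that the logarithmic terms are ``coercive'' and that a maximizing sequence is automatically ``bounded away from $0$'' is false: the problem has an approximate scaling invariance $u\mapsto\lambda u$, $v\mapsto v/\lambda$, which preserves the constraint up to the bounded term $\log(1-h)$ and changes $\int f\log u+\int g\log v$ only through $\log\lambda\bigl(\int f-\int g\bigr)=0$ by \eqref{e101}. Consequently the set of maximizers is non-compact (this is exactly the content of Remark \ref{r201}: there is a maximizer with $\inf\rho\geq c_0$ for every $c_0>0$), and a maximizing sequence may have $\inf u_k\to0$ and $\sup v_k\to\infty$, in which case no subsequence converges to an element of $\mathcal{K}$. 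The paper's proof spends the entire second half of Lemma \ref{l201} repairing this: one restricts to the normalized classes $\mathcal{K}_{C_0}=\mathcal{K}\cap\{u\geq C_0\}$, where the transform $(\mathcal{A},\mathcal{B})$ does give uniform two-sided bounds and uniform Lipschitz constants, and then proves by a separate perturbation argument --- which crucially uses the balance condition \eqref{e208}, i.e.\ the energy conservation \eqref{e101} --- that $\sup_{\mathcal{K}_{C_0}}I=\sup_{\mathcal{K}}I$, so the normalized maximizer is a genuine maximizer over $\mathcal{K}$. Your proposal contains no normalization and no argument that restricting to a normalized class does not lower the supremum; as written, the compactness step fails. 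A secondary, smaller point: the uniform lower bound $\phi_s\geq\delta_0>0$ (not mere positivity) is what the Lipschitz estimates and the envelope formula require, and obtaining it uniformly as $v\to\infty$ is precisely where \eqref{e105} enters quantitatively; this deserves the explicit limit analysis the paper gives rather than a bare Cauchy--Schwarz inequality.
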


In Theorem \ref{t101}, the functions $\rho, \eta$ are also called potential functions, and a solution of the reflector problem needs to be understood as a weak solution. The notion of weak solutions was introduced in \cite{KW,KO}, see \S 2.2 below. 
It follows from Remark \ref{r201} that for each choice of the parameter $c_0>0$, there is a weak solution $\rho$ satisfying $\inf_\Omega\rho\geq c_0$.

Moreover, we show that the function $\rho$ solves a Monge-Amp\`ere type equation. Assume that $\Omega^*$ is given implicitly by
	\begin{equation}\label{e107}
		\Omega^*=\{Z\in\mathbb{R}^{n+1}\,:\,\psi(Z)=0\}.
	\end{equation}
Suppose that $\Omega$ is a subset of upper unit sphere $\mathbb{S}^n_+=\mathbb{S}^n\cap\{x_{n+1}>0\}$. Let $X=(x,x_{n+1})$ be a parameterization of $\Omega$, where $x_{n+1}=\sqrt{1-|x|^2}=:\omega(x)$, and $x=(x_1,\cdots,x_n)$. For simplification, we define some auxiliary functions
	\begin{eqnarray}
		a \!\!&=&\!\! |D\rho|^2-(\rho+D\rho\cdot x)^2, \label{e108}\\
		b \!\!&=&\!\! |D\rho|^2+\rho^2-(D\rho\cdot x)^2, \label{e109}\\
		t \!\!&=&\!\! \frac{\rho x_{n+1}-y_{n+1}}{\rho x_{n+1}},\quad \beta=\frac{t}{(Y-X\rho)\cdot\nabla\psi}, \label{e110}
	\end{eqnarray}
and denote the matrix
	\begin{equation}\label{e111}
		\mathcal{N}=\{\mathcal{N}_{ij}\},\quad \mathcal{N}_{ij}=\delta_{ij}+\frac{x_ix_j}{1-|x|^2}.
	\end{equation}
By computing in this local orthonormal frame, we obtain our equation as follows	
\begin{theorem}\label{t102}
The function $\rho$ is a solution of
	\begin{equation}\label{e112}
		\left|\det\,\left[D^2\rho-\frac{2}{\rho}D\rho\otimes D\rho-\frac{a(1-t)}{2t\rho}\mathcal{N}\right]\right|=\left|\frac{a^{n+1}}{t^nb\beta}\right|\frac{f}{2^n\rho^{2n+1}\omega^2g|\nabla\psi|}.
	\end{equation}
\end{theorem}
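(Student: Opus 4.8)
The plan is to obtain \eqref{e112} as the Jacobian (change of variables) equation for the reflection map, in the spirit of the far field derivation in \cite{W04} but with ellipsoids of revolution replacing paraboloids. By Theorem~\ref{t101} the radial graph $\Gamma=\{X\rho(X):X\in\Omega\}$ solves the reflector problem, so the reflection map $T\colon\Omega\to\Omega^{*}$ — which sends $X$ to the point where the ray from $O$ through $X$, after one reflection off $\Gamma$, meets $\Omega^{*}$ — pushes $f\,d\mu$ forward to $g\,d\nu$. I carry out the computation where $\rho$ is twice differentiable (so that $T$ is $C^{1}$), the equation being read in the weak sense of \S2.2 in general. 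In the chart $X=(x,\omega(x))$ the pushforward identity becomes, pointwise,
\[
 f(X)=g\bigl(T(X)\bigr)\,\bigl|\det DT(X)\bigr|\,\mathcal J(X),
\]
where $\mathcal J$ collects the Jacobian factors converting $dx$ to $d\mu$ on $\Omega$ — this involves $\sqrt{\det\mathcal N}=\omega^{-1}$, with a further power of $\omega^{-1}$ produced when the spherical terms are differentiated inside $DT$ — and $dy$ to $d\nu$ on $\Omega^{*}=\{\psi=0\}$, which brings in $|\nabla\psi|$ together with the transversality factor $(Y-X\rho)\cdot\nabla\psi$ of the reflected ray to $\Omega^{*}$, i.e. the denominator of $\beta$ in \eqref{e110}. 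Everything then reduces to computing $\det DT$ in the chart and bookkeeping the scalar prefactors.

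For the first step I would make $T$ explicit from the optimization data. Along the graph of $T$ the constraint is active, $\phi\bigl(X,T(X),\rho(X),\eta(T(X))\bigr)=0$, and, since $\phi\le0$ with equality along the graph of $T$, the function $X\mapsto\phi(X,Y,\rho(X),\eta(Y))$ attains its maximum value $0$ at $X=T^{-1}(Y)$; differentiating this in the chart yields the $n$ first‑order relations $\phi_{x_i}+\phi_u\,\rho_{x_i}=0$. Geometrically, \eqref{e106} says precisely that the point $\rho(X)X$ lies on the ellipsoid of revolution with foci $O$ and $Y$ determined by the parameter $\eta(Y)$, and the first‑order relations say that $\Gamma$ is tangent there to that ellipsoid — equivalently, the normal of $\Gamma$ bisects the angle $\angle\bigl(O,\rho(X)X,T(X)\bigr)$, which is Snell's law. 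Solving this algebraic system for $Y$ and for $\eta$ in terms of $x,\rho,D\rho$ gives $T=T(x,\rho(x),D\rho(x))$ through $\rho$ and $D\rho$ alone; the combinations $a,b$ of \eqref{e108}–\eqref{e109} are exactly the quantities in $\rho,D\rho,x$ that turn up, and $t$ of \eqref{e110} records how far along the reflected ray $Y$ sits. The hypothesis $\overline\Omega\cap\overline V=\emptyset$ of \eqref{e105} keeps $a,b,t\neq0$, so this inversion is nonsingular.

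It then remains to differentiate $Y=T(x,\rho(x),D\rho(x))$ once more in $x$. The chain rule produces $D^{2}\rho$ linearly, a quadratic term in $D\rho$ from the $\rho^{-1}$‑type factors carried by $T$, and a term proportional to $\mathcal N$ from differentiating the spherical terms $\omega$, $D\omega$ in the chart; collecting these, $DT$ factors as a scalar multiple of $D^{2}\rho-\tfrac{2}{\rho}D\rho\otimes D\rho-\tfrac{a(1-t)}{2t\rho}\mathcal N$, pre‑ and post‑multiplied by invertible matrices with explicitly computable determinants. Taking $\det$, substituting into the pushforward identity and solving for $\bigl|\det\!\bigl(D^{2}\rho-\tfrac{2}{\rho}D\rho\otimes D\rho-\tfrac{a(1-t)}{2t\rho}\mathcal N\bigr)\bigr|$ gives \eqref{e112}, the absolute values absorbing the possible orientation reversal of $T$. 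The hard part is exactly this last computation: differentiating the explicit reflection map in the local orthonormal frame, recognizing the precise matrix $D^{2}\rho-\tfrac2\rho D\rho\otimes D\rho-\tfrac{a(1-t)}{2t\rho}\mathcal N$, and reconciling every scalar — the $a^{n+1}/(t^{n}b\beta)$ issuing from $\det DT$ and the ray–target transversality, the $2^{-n}\rho^{-2n-1}\omega^{-2}$ from the reflection formula and the two surface parametrizations, and the $|\nabla\psi|^{-1}$ from the implicit description of $\Omega^{*}$ — into the stated form.
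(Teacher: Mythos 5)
Your outline is correct in substance and would yield \eqref{e112}, but it is organized as the reflection-law computation of \cite{KW} rather than as the paper's own argument. The paper differentiates the stationarity condition $\phi_x+\phi_t Du=0$ of the active constraint once more and uses the envelope relation $Dv=-\phi_y/\phi_s$ to obtain the general identity \eqref{e304}, valid for any nonlinear optimization of the form \eqref{e201}--\eqref{e202}; the matrix $D^2\rho-\frac{2}{\rho}D\rho\otimes D\rho-\frac{a(1-t)}{2t\rho}\mathcal N$ then falls out of the left side of \eqref{e304} purely from the derivatives $\phi_t,\phi_{tt},\phi_{xt},\phi_{xx}$ of the constraint function \eqref{e231}, together with \eqref{e246}, \eqref{e250} and the expression \eqref{e313} for the reflected-ray length (this is \eqref{e308}--\eqref{e315}). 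For the scalar on the right the paper does \emph{not} differentiate the explicit map: it quotes the Jacobian formula \eqref{e316} of \cite{KW} for $|\det DT_\rho|$ in terms of $M(\rho)$ and closes the loop with the pushforward identity $|\det DT_\rho|=f/(\omega g)$, thereby sidestepping the ``rather complicated'' mixed-derivative determinant in \eqref{e317}. Your plan --- solve the first-order relations for $Y=T(x,\rho(x),D\rho(x))$, differentiate once more, and factor $DT$ through the Monge--Amp\`ere matrix --- is exactly the alternative the paper mentions after \eqref{e316}, and is in effect the computation needed to \emph{prove} \eqref{e316}; it is legitimate and more self-contained, since it does not import \eqref{e316} as a black box, but it forfeits the generality that the detour through \eqref{e304} provides (the same derivation applies verbatim to other constraint functions, e.g.\ refractors and the far-field limit of Remark \ref{r301}).

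The caveat is that for a statement of this kind the proof \emph{is} the computation, and your proposal defers essentially all of it. You assert, without verification, that $DT$ factors as invertible matrices times $D^2\rho-\frac{2}{\rho}D\rho\otimes D\rho-\frac{a(1-t)}{2t\rho}\mathcal N$ and that the scalars assemble into $\bigl|a^{n+1}/(t^nb\beta)\bigr|\,2^{-n}\rho^{-2n-1}\omega^{-2}|\nabla\psi|^{-1}$. Nothing in the outline pins down, for instance, why the coefficient of $\mathcal N$ is $\frac{a(1-t)}{2t\rho}$ (in the paper this requires combining \eqref{e246}, \eqref{e250} and \eqref{e313}), nor why the quadratic term carries $2/\rho$ rather than $1/\rho$ (it is the sum of the $\phi_{tt}/\phi_t$ contribution and a second copy hidden inside $\phi_{xx}$ via \eqref{e306}--\eqref{e307}). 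Until those are carried out, or \eqref{e316} is invoked explicitly as the paper does, the proposal remains a plan rather than a proof.
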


The equation \eqref{e112} was previously obtained by Karakhanyan and Wang studying the near field reflector problem \cite{KW}.
One of the main differences in our derivation of \eqref{e112} is that instead of applying the reflection law as in \cite{KW}, we have differentiated the constraint function \eqref{e106} directly in general cases, see \eqref{e304} below. 
We remark that our method is more general and can be applied to the study of other reflector and refractor problems, \cite{GH,Liu}. 

This paper is arranged as follows. In Section 2, we first introduce a class of nonlinear optimization with potential functions, and then prove Theorem \ref{t101}. In Section 3, we derive the equation for potentials arising in general nonlinear optimization problems, and apply this formula to prove Theorem \ref{t102}. In Remark \ref{r301}, we point out that the far field reflector is a limit case of the near field one and related to a linear optimization problem.

\section{Formulation to optimization}

\subsection{Nonlinear optimization}
In general, we consider a functional 
	\begin{equation}\label{e201}
		I(u,v)=\int_{U\times V}F(x,y,u,v)d\gamma,
	\end{equation}
for $(u,v)\in C(U)\times C(V)$, where $U,V$ are two compact domains in $\mathbb{R}^n$ or a manifold $\mathcal{M}^n$, $F$ is a function on $U\times V\times\mathbb{R}^2$, and the measure $d\gamma$ has marginals $dx, dy$, which are the volume elements of $U, V$, respectively. 

We want to maximize the functional $I$ among all pairs $(u,v)$ in a constraint set 
	\begin{equation}\label{e202}
		\mathcal{K}=\left\{(u,v)\in C(U)\times C(V) \,:\, \phi(x,y,u,v)\leq0\mbox{ in }U\times V\right\},
	\end{equation}
where $\phi$ is the constraint function defined on $U\times V\times\mathbb{R}^2$.

Note that when 
	\begin{equation}\label{e203}
		F(x,y,u,v)=\frac{1}{|V|}f(x)u(x)+\frac{1}{|U|}g(y)v(y),\quad d\gamma=dxdy,
	\end{equation}
for some $f>0,\in L^1(U)$, $g>0,\in L^1(V)$ satisfying $\int_Uf=\int_Vg$, and
	\begin{equation}\label{e204}
		\phi(x,y,u,v)=u(x)+v(y)-c(x,y),
	\end{equation}
we have a linear optimization problem related to an optimal transportation with the cost function $c$, and mass densities $f, g$ supported on $U, V$, respectively. See \cite{Bre,GM,U,V}.

\begin{definition}
 A pair $(u,v)\in\mathcal{K}$ is called \emph{dual pair} with respect to $\phi$, if
	\begin{eqnarray}\label{e205}
		u(x) \!\!&=&\!\! \sup\{t\,:\,\phi(x,y,t,v(y))\leq0,\quad\forall y\in V\}, \\
		v(y) \!\!&=&\!\! \sup\{s\,:\,\phi(x,y,u(x),s)\leq0,\quad\forall x\in U\}. \nonumber
	\end{eqnarray}
If furthermore $I(u,v)=\sup_\mathcal{K}I$, $(u,v)$ is called \emph{dual maximizing pair} of $I$. In such a case, $u,v$ are also called \emph{potential functions} in the nonlinear optimization \eqref{e201}--\eqref{e202}. 
\end{definition}

Write $F=F(x,y,t,s)$ and $\phi=\phi(x,y,t,s)$, where $x,y,t,s$ are independent variables. 
Use the subscripts to denote the partial derivatives, i.e. $F_t=\partial F/\partial t$, $\phi_s=\partial\phi/\partial s$, etc. 
We always assume that $F$ is $C^1$ smooth in $t,s$ and integrable in $x,y$; $\phi$ is $C^2$ smooth in all variables.  
Moreover, we assume the following conditions on $F$ and $\phi$:
\begin{itemize}
	\item[(i)] $F(x,y,t,s)$ is monotone increasing in $t,s$, namely 
		\begin{equation}\label{e206}
			F_t\geq0,\quad F_s\geq0,\qquad \forall (x,y,t,s)\in U\times V\times\mathbb{R}\times\mathbb{R}.
		\end{equation} 
	\item[(ii)] $\phi(x,y,t,s)$ is strictly increasing in $t,s$, namely for a constant $\delta_0>0$
		\begin{equation}\label{e207}
			\phi_t\geq\delta_0,\quad \phi_s\geq\delta_0,\qquad \forall (x,y,t,s)\in U\times V\times\mathbb{R}\times\mathbb{R}.
		\end{equation} 
	\item[(iii)] for any pair $(u,v)\in\mathcal{K}$, the balance condition holds: 
		\begin{equation}\label{e208}
			\int_{U\times V}\left\{-F_t(x,y,u(x),v(y))+F_s\frac{\phi_t}{\phi_s}(x,y,u(x),v(y))\right\}d\gamma=0.
		\end{equation}
\end{itemize}

\begin{lemma}\label{l201}
Under the above assumptions and \eqref{e206}--\eqref{e208}, $I(u,v)$ in \eqref{e201} has a dual maximizing pair $(\bar u,\bar v)\in\mathcal{K}$, where $\mathcal{K}$ is the constraint set given in \eqref{e202}.
\end{lemma}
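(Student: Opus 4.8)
The plan is to follow the classical strategy for existence of maximizers in optimal transportation duality (as in Brenier, Gangbo–McCann), adapted to the nonlinear constraint $\phi$. First I would reduce the problem to dual pairs: given any $(u,v)\in\mathcal{K}$, define $\tilde u, \tilde v$ by the sup formulas \eqref{e205} (iterating $v\mapsto\tilde v$ using $u$, then $u\mapsto\tilde u$ using $\tilde v$). Using the strict monotonicity \eqref{e207} of $\phi$ in $t,s$, these suprema are finite and attained, and the implicit function theorem gives that $\tilde u,\tilde v$ are continuous; moreover $(\tilde u,\tilde v)\in\mathcal{K}$ and $\tilde u\geq u$, $\tilde v\geq v$ pointwise. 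Since $F$ is monotone increasing in $t,s$ by \eqref{e206}, we get $I(\tilde u,\tilde v)\geq I(u,v)$. Hence the supremum of $I$ over $\mathcal{K}$ equals the supremum over dual pairs, and it suffices to extract a limit from a maximizing sequence of dual pairs.

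Next I would establish compactness of the family of dual pairs modulo the one-parameter normalization. The key observation is that $\phi$ being strictly increasing in each of $t,s$ means that along a dual pair the two functions are "linked": fixing the value of $u$ at one point $x_0$ determines a uniform two-sided bound on $v$ everywhere (solve $\phi(x_0,y,u(x_0),s)=0$ for $s$, using $\phi_s\geq\delta_0$ and continuity/compactness of $U\times V$), and symmetrically. Combined with a uniform modulus of continuity — which again comes from the implicit-function representation $s(y)$ of the sup in \eqref{e205} together with the $C^2$ bounds on $\phi$ on the compact set $U\times V\times(\text{bounded }t,s\text{ range})$ — the normalized dual pairs form an equicontinuous, uniformly bounded family. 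So by Arzelà–Ascoli a maximizing sequence $(u_k,v_k)$, normalized by $u_k(x_0)=$ const (we are free to shift along the dual-pair one-parameter family since $\phi$ enjoys no invariance a priori — here is where the balance condition \eqref{e208} enters, see below), has a subsequence converging uniformly to some $(\bar u,\bar v)$, which lies in $\mathcal{K}$ because the constraint $\phi\leq 0$ is closed under uniform limits, and is again a dual pair by passing to the limit in \eqref{e205}. Continuity of $F$ in $t,s$ and boundedness give $I(\bar u,\bar v)=\sup_{\mathcal{K}}I$.

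The role of the balance condition \eqref{e208} is to pin down the correct normalization and to guarantee the maximizer is genuinely interior in the one-parameter direction. Concretely, for a dual pair one can consider the variation obtained by replacing $(u,v)$ with the dual pair associated to $u+\varepsilon$; differentiating $I$ in $\varepsilon$ at $\varepsilon=0$ produces exactly the integral in \eqref{e208}, because a shift of $u$ by $\varepsilon$ forces, to first order, a shift of $v$ by $-\varepsilon\,\phi_t/\phi_s$ (from differentiating $\phi(x,y,u,v)=0$ on the contact set). Thus \eqref{e208} says the first variation in this direction vanishes identically, so no normalization is lost by maximizing — equivalently, the maximizing sequence can be normalized without changing $I$ — and the limit $(\bar u,\bar v)$ is a bona fide dual maximizing pair rather than an artifact of a degenerate direction.

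I expect the main obstacle to be the compactness step: proving the uniform equicontinuity and two-sided bounds on normalized dual pairs. This requires knowing that the relevant values of $t,s$ stay in a fixed compact interval (so that the $C^2$ bounds on $\phi$ and the lower bound $\phi_s,\phi_t\geq\delta_0$ are usable), which in turn must be bootstrapped from the dual-pair structure and the geometry of $U,V$; degeneration could in principle occur if $u$ or $v$ runs off to $0$ or $\infty$, and ruling this out is precisely where the hypotheses on $\phi$ — and in the reflector application, the separation condition $\overline\Omega\cap\overline V=\emptyset$ — do their work. The remaining steps (dual-pair reduction, closedness of $\mathcal{K}$, semicontinuity of $I$) are routine once this compactness is in hand.
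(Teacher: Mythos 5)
Your proposal follows the same overall strategy as the paper's proof: reduce to dual pairs via the sup formulas \eqref{e205} (using the monotonicity \eqref{e206} of $F$ to see that $I$ does not decrease under dualization), establish uniform bounds and a uniform modulus of continuity for the dualized pairs, extract a limit by Arzel\`a--Ascoli, and invoke the balance condition \eqref{e208} to neutralize the one-parameter ``shift'' degeneracy. Two implementation points differ and are worth recording. First, the paper does not use the implicit function theorem for equicontinuity (the sup-defined $u^*,v^*$ are only Lipschitz in general); it proves the Lipschitz bound $\|u^*\|_{Lip(U)}\leq C_2/C_3$ directly by comparing the contact conditions at two nearby points, using only $\sup(|\phi_x|+|\phi_y|)$ and the lower bound $\phi_t,\phi_s\geq\delta_0$. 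Relatedly, your two-sided bound scheme has a mild circularity as stated (a lower bound on $v^*$ needs an upper bound on $u$, which needs a lower bound on $v^*$); the paper breaks it by imposing $u\geq C_0$ at the outset, i.e.\ working in $\mathcal{K}_{C_0}=\mathcal{K}\cap\{u\geq C_0\}$, deducing $v^*\leq C_1$, then $u^*\geq\tilde C_0$, and obtaining the remaining two bounds from the Lipschitz estimate together with $\sup_V v^*=C_1$, $\inf_U u^*=\tilde C_0$. Your pointwise normalization $u(x_0)=\mathrm{const}$ can be made to work, but only after one more dualization pass in the right order. Second, and more substantively, the paper does not argue by a first variation along the global shift $u\mapsto u+\varepsilon$; it proves instead that $\sup_{\mathcal{K}_{C_0}}I$ is independent of $C_0$ by taking a maximizer in $\mathcal{K}_{C_0-1}$, raising $u$ by $+2$ on a union of $N$ small balls, re-dualizing, and using \eqref{e208} with the mean value theorem to show $I$ changes only by $O(\delta)+O(N\varepsilon^n)$. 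This localized bump is engineered so that the re-dualized $\tilde u^*$ equals $u+2+O(\delta)$ on most of $U$, landing the pair in $\mathcal{K}_{C_0}$ after a \emph{finite} displacement; your infinitesimal computation identifies the correct role of \eqref{e208} but would still have to be integrated over a finite range, checking that the shifted-and-redualized family stays in $\mathcal{K}$ and that $I$ is genuinely constant, not merely stationary at $\varepsilon=0$, along it. That finite-displacement step is precisely the part of the argument the paper carries out in detail.
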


\begin{proof}
The proof is inspired by \cite{Bre,GM}. 
Given any pair $(u,v)\in\mathcal{K}$, we claim that $I(u,v)$ does not decrease if $v$ is replaced by
	\begin{equation}\label{e209}
		v^*(y) = \sup\{s\,:\,\phi(x,y,u(x),s)\leq0,\ \ \forall x\in U\}.
	\end{equation}

In fact, by the continuity of $\phi$ and $u$, for each $y\in V$ there is some $x\in\overline{U}$ such that 
	\[\phi(x,y,u(x),v^*(y))=0 \geq \phi(x,y,u(x),v(y)),\]
since $(u,v)\in\mathcal{K}$. By \eqref{e207}, $v^*\geq v$. Furthermore, $\phi(x,y,u(x),v^*(y))\leq0$ for all $(x,y)\in U\times V$, so $(u,v^*)\in\mathcal{K}$.	

Since $v^*\geq v$, by \eqref{e206} we have
	\[I(u,v^*)\geq I(u,v).\]
Similarly, if we define	
	\begin{equation}\label{e210}
	    u^{*}(x) = \sup\{t\,:\,\phi(x,y,t,v^*(y))\leq0,\ \ \forall y\in V\},
	\end{equation}
then $(u^*,v^{*})\in\mathcal{K}$ and
	\[I(u^*,v^{*})\geq I(u,v^*)\geq I(u,v).\]
Thus we do not decrease $I(u,v)$ by replacing $(u,v)$ by $(u^*,v^{*})$. The claim is proved. 

Define $\mathcal{K}_{C_0}=\mathcal{K}\cap\{u\geq C_0\}$, where $C_0$ is a constant. The constant $C_0$ may be chosen negative and sufficiently small in the following context. 
We show that $u^*$ and $v^*$ are uniformly bounded if $(u,v)\in\mathcal{K}_{C_0}$. 
Since $v^*\geq v, u\geq C_0$, by \eqref{e207} we have for each $y\in V$, $s:=v^*(y)$,
	\[\phi(x,y,C_0,s) \leq \phi(x,y,u(x),s) \leq0,\quad\mbox{ for all }x\in U.\]
Then by \eqref{e207} again, there exists a constant $C_1$ such that $s\leq C_1$. This implies that
	\begin{equation}\label{e211}
		v\leq v^*\leq C_1,
	\end{equation}
we may choose $C_1$ such that $\sup_V v^*=C_1$. By a similar argument, there is another constant $\tilde C_0$ depending on $\phi$ and $C_1$ such that $\inf_U u^*=\tilde C_0$. The constant $\tilde C_0\geq C_0$, since $u^*\geq u$ in $U$, and so $(u^*,v^*)\in\mathcal{K}_{C_0}$.

We next deduce the lower bound of $v^*$ and the upper bound of $u^*$ by showing that $u^*$ and $v^*$ are locally Lipschitz functions. 
Consider two points in $U$, $x_1\neq x_2$ and $|x_1-x_2|<\varepsilon$ is sufficiently small. There are two points $y_1,y_2\in\overline{V}$ such that
	\begin{eqnarray*}
		\phi(x_1,y_1,u^*(x_1),v^*(y_1))\!\!&=&\!\!0, \\
		\phi(x_2,y_2,u^*(x_2),v^*(y_2))\!\!&=&\!\!0.
	\end{eqnarray*}
Then we have
	\[\begin{split}
		0=&\phi(x_2,y_2,u^*(x_2),v^*(y_2))-\phi(x_1,y_2,u^*(x_1),v^*(y_2)) \\
		  &+\phi(x_1,y_2,u^*(x_1),v^*(y_2))-\phi(x_1,y_1,u^*(x_1),v^*(y_1)) \\
		 =&\phi_t(\hat x,y_2,\hat u^*,v^*)(u^*(x_2)-u^*(x_1))-\phi_x(\hat x,y_2,\hat u^*,v^*)\cdot(x_2-x_1) \\
		  &+\phi(x_1,y_2,u^*(x_1),v^*(y_2)),
	\end{split}\]
where $\hat u^*=\theta u^*(x_1)+(1-\theta)u^*(x_2), \hat x=\bar\theta x_1+(1-\bar\theta)x_2$, for some $\theta,\bar\theta\in(0,1)$.
Noting that $\phi(x_1,y_2,u^*(x_1),v^*(y_2))\leq0$, we have 
	\[u^*(x_2)-u^*(x_1)\geq -\frac{C_2}{C_3}|x_2-x_1|,\]
where the constants $C_2=\sup(|\partial_x\phi|+|\partial_y\phi|)$, and $C_3=\min\left\{\inf\partial_t\phi, \inf\partial_s\phi\right\}$.
Due to \eqref{e207}, the constant $C_3\geq\delta_0$ is positive.
On the other hand, replacing $\phi(x_1,y_2,u^*(x_1),v^*(y_2))$ by $\phi(x_2,y_1,u^*(x_2),v^*(y_1))$ in the above calculation, we have
	\[u^*(x_2)-u^*(x_1)\leq \frac{C_2}{C_3}|x_2-x_1|.\]
Therefore, the Lipschitz constant of $u^*$ on $U$ is controlled by
	\begin{equation}\label{e212}
		\|u^*\|_{Lip(U)}\leq C_4,
	\end{equation}
where the constant $C_4=C_2/C_3$.
A similar argument holds for $v^{*}$ as well, which implies that $\|v^*\|_{Lip(V)}\leq C_4$.
Hence, we have $u^*\leq\tilde C_0+C_4\diam(U)$ and $v^*\geq C_1-C_4\diam(V)$ because of \eqref{e211}.

We conclude, therefore, that any pair $(u,v)\in\mathcal{K}_{C_0}$ may be replaced by a bounded, Lipschitz pair $(u^*,v^*)\in\mathcal{K}_{C_0}$ without decreasing $I$. We now choose a sequence $\{(u_k,v_k)\}\subset\mathcal{K}_{C_0}$ such that
	\[I(u_k,v_k)\rightarrow \sup_{(u,v)\in\mathcal{K}_{C_0}}I(u,v).\]
By the above considerations we may assume that each $(u_k,v_k)$ is a bounded, uniformly Lipschitz pair, uniformly with respect to $k$, so there is a subsequence converging uniformly to a bounded, Lipschitz, maximizing pair $(\bar u,\bar v)\in\mathcal{K}_{C_0}$. 

Last, we show that when $C_0<0$ is sufficiently small, 
	\[\sup_{(u,v)\in\mathcal{K}_{C_0}}I(u,v)=\sup_{(u,v)\in\mathcal{K}}I(u,v),\]
or equivalently, $\sup_{\mathcal{K}_{C_0}}I$ is independent of $C_0$. 
By definition, one has $\sup_{\mathcal{K}_{C_0-1}}I\geq\sup_{\mathcal{K}_{C_0}}I$. So, it suffices to show the reverse inequality.
Let $(u,v)\in\mathcal{K}_{C_0-1}$ be a maximizer such that $I(u,v)=\sup_{\mathcal{K}_{C_0-1}}I$, and $\{x_k\}_{k=1,\cdots,N}$ be a set of points in $U$. 
For a small constant $\varepsilon>0$, define
	\[\tilde u=\left\{\begin{array}{ll}
	 u & \mbox{ in } U-\cup_NB_\varepsilon(x_k), \\
	 u+2 & \mbox{ in } \cup_NB_\varepsilon(x_k).
	\end{array}\right.\]
Note that we may replace $\tilde u$ by its mollification $\tilde u_h=\rho_h*\tilde u$, where $\rho_h$ is the standard mollifier function \cite{GT}. For simplicity, we assume $\tilde u$ continuous in the sense that for $h>0$ sufficiently small, 
	\[I(\tilde u_h, v)=I(u,v)+O(N\varepsilon^n).\]
Define
	\begin{eqnarray*}
		\tilde v^*(y) \!\!&=&\!\! \sup\{s\,:\,\phi(x,y,\tilde u(x),s)\leq0,\ \ \forall x\in U\}, \\
		\tilde u^*(x) \!\!&=&\!\! \sup\{t\,:\,\phi(x,y,t,\tilde v^*(y))\leq0,\ \ \forall y\in V\}.
	\end{eqnarray*}
Since the constraint function $\phi$ is smooth and by \eqref{e207}, except a set $E\subset U$ and a set $E'\subset V$ of measure $|E|=|E'|=O(N\varepsilon^n)$,
	\begin{eqnarray*}
		\tilde v^*\!\!&=&\!\! v-2\frac{\phi_t}{\phi_s}+O(\delta)\quad\mbox{ in }V\setminus E', \\
	    \tilde u^*\!\!&=&\!\! u+2+O(\delta)\quad\mbox{ in }U\setminus E,
	\end{eqnarray*}
where $\delta:=\min_{i\neq j}\{\dist(x_i,x_j)\}$. 
Therefore, by \eqref{e208} and the mean value theorem we have
	\[\begin{split}
		I(\tilde u^*,\tilde v^*) &= I(u,v)+2\int_{(U\setminus E)\times(V\setminus E')}\left\{F_t-F_s\frac{\phi_t}{\phi_s}\right\}d\gamma+O(\delta)+O(N\varepsilon^n) \\
			&\geq I(u,v)-C\delta-CN\varepsilon^n.
	\end{split}\]
As $(u,v)\in\mathcal{K}_{C_0-1}$, we may assume that $\inf_U u=C_0-1$. Otherwise, one has $\inf_U u=C_0-\tau_0$ for some constant $\tau_0<1$. This implies that  
$\sup_{\mathcal{K}_{C_0-1}}I=\sup_{\mathcal{K}_{C_0-\tau_0}}I$, namely $\sup_{K_{C_0}}I$ is independent of $C_0$, and the proof is finished.
By the definition, $\delta$ will become small if the number of points $N$ is sufficiently large so that we have $(\tilde u^*,\tilde v^*)\in\mathcal{K}_{C_0}$ and
	\[\sup_{\mathcal{K}_{C_0}}I\geq I(\tilde u^*,\tilde v^*)\geq \sup_{\mathcal{K}_{C_0-1}}I-C\delta-CN\varepsilon^n.\]
Then, choosing $\varepsilon>0$ sufficiently small we have
	\[\sup_{\mathcal{K}_{C_0-1}}I\leq\sup_{\mathcal{K}_{C_0}}I,\]
by letting $\delta\to0, \varepsilon\to0$, which implies that $\sup_{\mathcal{K}_{C_0}}I$ is independent of $C_0$, and the proof is finished.
\end{proof}

\begin{remark}\label{r201}
From the proof of Lemma \ref{l201}, we conclude that there exist infinitely many maximizing pairs. In fact, if $(u,v)$ is a maximizer and $C_0=\inf_U u$, then there is another maximizer in $\mathcal{K}_{C_0+1}$, which is different from $(u,v)$.
\end{remark}

\begin{lemma}\label{l202}
 Let $(u,v)\in\mathcal{K}$ be a dual maximizing pair in Lemma \ref{l201}. The equation
	\begin{equation}\label{e213} 
		\phi(x,T(x),u(x),v(T(x)))=0
	\end{equation}
 can be solved by a mapping $T : U\to V$ implicitly determined by the formula
	\begin{equation}\label{e214}
		\phi_x(x,T(x),u,v)+\phi_t(x,T(x),u,v)Du(x)=0,
	\end{equation}
 at any differentiable point of $u$.
 Furthermore, if for any $(x,y,t,s)\in U\times V\times\mathbb{R}^2$,
	\begin{equation}\label{e215}
	 \det\,\left[\phi_{xy}+\phi_{yt}\otimes Du+\phi_{xs}\otimes Dv+\phi_{ts}Du\otimes Dv\right]\neq0,
	\end{equation}
 the mapping $T$ is uniquely determined by \eqref{e214}.  
\end{lemma}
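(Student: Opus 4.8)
The plan is to combine the dual-pair structure of $(u,v)$ with an envelope (first-order optimality) argument, and then read off \eqref{e215} as the Jacobian nondegeneracy needed for the implicit function theorem. First I would produce the contact map: fix $x\in U$. Since $(u,v)\in\mathcal{K}$, one has $\phi(x,y,u(x),v(y))\le 0$ for every $y$, so $m(x):=\max_{y\in\overline V}\phi(x,y,u(x),v(y))\le 0$ by continuity and compactness of $\overline V$. If $m(x)=-c<0$, then, using that $u,v$ are bounded (Lemma \ref{l201}) and $\phi$ is $C^1$, hence $\phi_t$ bounded on the relevant compact set, replacing $u(x)$ by $u(x)+\min\{1,c/(2M)\}$ keeps $\phi(x,y,\cdot,v(y))\le 0$ for all $y$, contradicting the first line of \eqref{e205}; this is where the strict monotonicity \eqref{e207} is really used. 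Hence $m(x)=0$ and the maximum is attained at some $y=T(x)\in\overline V$, which is precisely \eqref{e213}. (The maximizer may be picked measurably; boundary values occur on a null set, and for the reflector problem $\partial\Omega^*$ has measure zero by hypothesis.)

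Next I would derive \eqref{e214}. Let $x_0$ be an interior point of $U$ at which $u$ is differentiable, and set $y_0=T(x_0)$. Because $(u,v)\in\mathcal{K}$, the function $h(x):=\phi(x,y_0,u(x),v(y_0))$ satisfies $h\le 0$ on $U$, while $h(x_0)=0$ by Step 1; thus $h$ attains an interior maximum at $x_0$. Since $\phi\in C^1$ and $u$ is differentiable at $x_0$, the chain rule gives that $h$ is differentiable at $x_0$ with
\[
Dh(x_0)=\phi_x(x_0,y_0,u(x_0),v(y_0))+\phi_t(x_0,y_0,u(x_0),v(y_0))\,Du(x_0),
\]
and interior maximality forces $Dh(x_0)=0$, which is exactly \eqref{e214}. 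Note that \eqref{e214} involves only $u$ and $Du$ (and the values of $v$), so no differentiability of $v$ is needed here.

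For the uniqueness clause I would fix such an $x$ and consider the map $G:V\to\mathbb{R}^n$, $G(y)=\phi_x(x,y,u(x),v(y))+\phi_t(x,y,u(x),v(y))\,Du(x)$, so that \eqref{e214} reads $G(T(x))=0$. At a point where $v$ is differentiable, differentiating through the composition $y\mapsto v(y)$ yields, up to transposition of indices, $D_yG=\phi_{xy}+\phi_{xs}\otimes Dv+\phi_{yt}\otimes Du+\phi_{ts}\,Du\otimes Dv$, evaluated at $(x,y,u(x),v(y))$; assumption \eqref{e215} says precisely that $\det D_yG\neq 0$. Hence $G$ is a local diffeomorphism, the equation $G(y)=0$ has isolated solutions, and near the already-constructed solution $y=T(x)$ it is locally unique and depends continuously (indeed $C^1$) on $x$; so $T$ is the map characterized by \eqref{e214}. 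The main obstacle is really concentrated in Steps 1--2 — ensuring the defining supremum is \emph{attained} with value exactly $0$ and that it is an \emph{interior} extremum, so that the envelope identity \eqref{e214}, rather than merely an inequality, holds; in Step 3 the only delicacy is that the maximizing $v$ is a priori only Lipschitz, so the $Dv$ appearing in \eqref{e215} exists only almost everywhere, which is harmless since the nondegeneracy is used solely to run the implicit function theorem.
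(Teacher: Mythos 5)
Your Steps 1 and 2 are essentially the paper's own argument: the contact point $y=T(x)$ with $\phi(x,y,u(x),v(y))=0$ is produced from the sup-characterization \eqref{e205} together with continuity and the strict monotonicity \eqref{e207}, and \eqref{e214} is then read off as the first-order condition for the interior maximum of $x'\mapsto\phi(x',y,u(x'),v(y))$ at $x'=x$ (the paper records this maximality as the second line of \eqref{e216} and differentiates, exactly as you do with your function $h$). Your computation of $D_yG$ is also the correct matrix appearing in \eqref{e215}.

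The gap is in the uniqueness clause. From $\det D_yG\neq 0$ you conclude that $G$ is a local diffeomorphism, hence that zeros of $G$ are isolated and the solution is \emph{locally} unique near the constructed $T(x)$. But the lemma asserts that $y=T(x)$ is the \emph{only} solution of \eqref{e214} in $\overline V$, and isolatedness does not exclude a second zero elsewhere in $V$; the final claim ``so $T$ is the map characterized by \eqref{e214}'' does not follow from what you proved. The paper's argument is global: if $G(y)=G(\tilde y)=0$ with $\tilde y\neq y$, the mean value theorem applied along the segment from $y$ to $\tilde y$ gives
\[
\left[\phi_{xy}+\phi_{yt}\otimes Du+\phi_{xs}\otimes Dv+\phi_{ts}\,Du\otimes Dv\right]\Big|_{(x,\hat y,\,u(x),\,v(\hat y))}\cdot(\tilde y-y)=0
\]
for some intermediate $\hat y=\alpha\tilde y+(1-\alpha)y$, which contradicts \eqref{e215} since that hypothesis is assumed at \emph{every} $(x,y,t,s)$. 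Replacing your implicit-function-theorem step by this mean-value computation closes the gap. Note also that both versions require $Dv$ at intermediate points (along the whole segment in the paper's case), so the remark that a.e.-differentiability of $v$ is ``harmless'' is too quick; the nondegeneracy \eqref{e215} is being used as a pointwise hypothesis wherever these derivatives exist, not merely at the single point where you run the implicit function theorem.
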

The mapping $T$ in Lemma \ref{l202} is called the \emph{optimal mapping} associated to the dual maximizing pair $(u,v)$.
The inequality \eqref{e215} is a generalization of (A2) condition in optimal transportation \cite{MTW}.

\begin{proof}
Since $u$ satisfies \eqref{e205} and $v, \phi$ are continuous, for each $x\in U$, there exists some $y=:T(x)\in\overline{V}$ such that
	\begin{eqnarray}\label{e216}
		\phi(x,y,u(x),v(y)) \!\!&=&\!\! 0, \\
		\phi(x',y,u(x'),v(y)) \!\!&\leq&\!\! 0, \nonumber
	\end{eqnarray}
for any other $x'\in U$. Let $x\in U$ be a differentiable point of $u$, by differentiation we have
	\begin{equation*}
		\phi_x(x,y,u,v)+\phi_t(x,y,u,v)Du(x)=0.
	\end{equation*}
If there exists another $\tilde y\neq y$ in $\overline{V}$ such that
  \[\phi_x(x,\tilde y,u,v)+\phi_t(x,\tilde y,u,v)Du(x)=0.\]
By the mean value theorem,
  \[(\phi_{xy}+\phi_{yt}\otimes Du+\phi_{xs}\otimes Dv+\phi_{ts}Du\otimes Dv)\cdot(\tilde y-y)=0,\]
where the matrix is valued at $(x,\hat y,u(x),v(\hat y))$ with $\hat y=\alpha\tilde y+(1-\alpha)y$ for some $\alpha\in(0,1)$. This is a contradiction with \eqref{e215}, since $\tilde y\neq y$. 
\end{proof}

Moreover, such an obtained optimal mapping $T$ satisfies the following property, which is a kind of conservation of energy.

\begin{lemma}\label{l203}
Let $T$ be the optimal mapping associated to a dual maximizing pair $(u,v)$.
Assume the constraint function $\phi=\phi(x,y,t,s)$ is smooth and satisfies \eqref{e207}.
Then for any $h\in C(V)$, there holds
	\begin{equation}\label{e217}
		0=\int_{U\times V}\left\{-F_t\frac{\phi_s}{\phi_t}h(T(x))+F_s h(y)\right\}d\gamma.
	\end{equation}
	
\end{lemma}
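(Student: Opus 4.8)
The plan is to obtain \eqref{e217} as the first-variation (Euler--Lagrange, or KKT) condition satisfied by a dual maximizing pair, by perturbing the second potential $v$ in an arbitrary direction $h\in C(V)$ and pushing the perturbed function back into $\mathcal K$ via the $\phi$-duality operation. First I would fix $h$ and, for $|\varepsilon|$ small, set $v_\varepsilon=v+\varepsilon h\in C(V)$ and define its dual
\[
u_\varepsilon(x)=\sup\{t\,:\,\phi(x,y,t,v_\varepsilon(y))\le 0,\ \ \forall y\in V\}.
\]
Exactly as in the proof of Lemma \ref{l201}, assumption \eqref{e207} together with the smoothness of $\phi$ makes $u_\varepsilon$ a well-defined, bounded, Lipschitz function, and by construction $(u_\varepsilon,v_\varepsilon)\in\mathcal K$. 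Since $(u,v)$ is a dual pair, the first line of \eqref{e205} gives $u_0=u$, while $v_0=v$ trivially; and since $(u,v)$ is moreover maximizing, $I(u_\varepsilon,v_\varepsilon)\le I(u,v)=I(u_0,v_0)$ for all small $\varepsilon$. Thus $\varepsilon\mapsto I(u_\varepsilon,v_\varepsilon)$ attains an interior maximum at $\varepsilon=0$, and its derivative there must vanish.

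Next I would compute that derivative. At a point $x$ of differentiability of $u$ (almost every $x$, since $u$ is Lipschitz) write $u_\varepsilon(x)=\inf_{y\in\overline V}t_y(\varepsilon)$, where $t_y(\varepsilon)$ is the unique solution of $\phi(x,y,t_y(\varepsilon),v_\varepsilon(y))=0$; the implicit function theorem applies thanks to \eqref{e207} and yields $\partial_\varepsilon t_y(\varepsilon)=-(\phi_s/\phi_t)\,h(y)$ evaluated at $(x,y,t_y(\varepsilon),v_\varepsilon(y))$, uniformly in $y$. The infimum is attained at $y=T(x)$ by the construction of $T$ in Lemma \ref{l202}, and --- as in optimal transportation --- it is attained there uniquely for a.e.\ $x$, so Danskin's theorem gives
\[
\frac{d}{d\varepsilon}\Big|_{\varepsilon=0}u_\varepsilon(x)=-\frac{\phi_s}{\phi_t}\big(x,T(x),u(x),v(T(x))\big)\,h(T(x)),
\]
while $\partial_\varepsilon v_\varepsilon(y)=h(y)$. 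Since $F$ is $C^1$ in $(t,s)$, the uniform Lipschitz bound on $u_\varepsilon$ and dominated convergence let me differentiate $I(u_\varepsilon,v_\varepsilon)$ under the integral sign, producing
\[
\frac{d}{d\varepsilon}\Big|_{\varepsilon=0}I(u_\varepsilon,v_\varepsilon)=\int_{U\times V}\Big\{-F_t\,\frac{\phi_s}{\phi_t}\,h(T(x))+F_s\,h(y)\Big\}\,d\gamma,
\]
where $F_t,F_s$ are evaluated at $(x,y,u(x),v(y))$ and $\phi_s/\phi_t$ at $(x,T(x),u(x),v(T(x)))$. By the previous paragraph this vanishes; since the right-hand side is linear in $h$ and $h\in C(V)$ is arbitrary, \eqref{e217} follows.

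The step I expect to be the main obstacle is the a.e.\ single-valuedness of the optimal mapping $T$ --- that the infimum defining $u(x)$ is attained at a single point for almost every $x$ --- which is what lets Danskin's theorem be applied pointwise and makes $\varepsilon\mapsto u_\varepsilon(x)$ genuinely differentiable at $0$ rather than only one-sidedly. In optimal transportation this follows from Rademacher's theorem combined with a twist condition; here the natural substitute is the nondegeneracy hypothesis \eqref{e215} already used in Lemma \ref{l202}: at a point of differentiability of $u$ every minimizer $y$ satisfies \eqref{e214}, and \eqref{e215} then forces $y$ to be unique, exactly as in the proof of Lemma \ref{l202}. A secondary technical point is supplying the integrable domination of $F_t,F_s$ over the compact range swept out by $(u_\varepsilon,v_\varepsilon)$ that is needed to differentiate past the integral.
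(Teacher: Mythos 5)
Your proposal is correct and follows essentially the same route as the paper: perturb $v$ to $v_\epsilon=v+\epsilon h$, recover $u_\epsilon$ by $\phi$-duality so that $(u_\epsilon,v_\epsilon)\in\mathcal{K}$ with $(u_0,v_0)=(u,v)$, and set the first variation of $I$ at $\epsilon=0$ equal to zero. The only cosmetic difference is that where you invoke the implicit function theorem plus Danskin's theorem, the paper establishes the envelope identity $u_\epsilon(x)-u(x)=-\epsilon\frac{\phi_s}{\phi_t}h(T(x))+o(\epsilon)$ by hand via a two-sided comparison at the maximizing points $y_0=T(x)$ and $y_\epsilon$ (using $y_\epsilon\to y_0$, i.e.\ exactly the single-valuedness issue you flag).
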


\begin{proof}
Let $h\in C(V)$ and $|\epsilon|<1$ sufficiently small. Define
	\begin{equation}\label{e218}
		v_\epsilon(y)=v(y)+\epsilon h(y)
	\end{equation}
and
	\begin{equation}\label{e219}
		u_\epsilon(x)=\sup\{t\,:\,\phi(x,y,t,v_\epsilon(y))\leq0,\ \ \forall y\in V\}.
	\end{equation}
Then $(u_\epsilon,v_\epsilon)\in\mathcal{K}$ and $(u_0,v_0)=(u,v)$. 

Since $(u,v)$ satisfies \eqref{e205}, by Lemma \ref{l202}, for every $x\in U$ the supremum \eqref{e205} is attained at point $y_0=T(x)$. We claim that at these points we have
	\begin{equation}\label{e220}
		u_\epsilon(x)-u(x)=-\epsilon\frac{\phi_s}{\phi_t}h(T(x))+o(\epsilon).
	\end{equation}
	
To prove \eqref{e220}, first we show that $LHS\leq RHS$.
	\[\begin{split}
		0&=\phi(x,y_0,u(x),v(y_0)) \\
		 &=\phi(x,y_0,u(x),v_\epsilon(y_0)-\epsilon h(y_0)) \\
		 &=\phi(x,y_0,u(x),v_\epsilon(y_0))-\epsilon\phi_sh(y_0)+o(\epsilon) \\
		 &=\phi(x,y_0,u_\epsilon(x)+u(x)-u_\epsilon(x),v_\epsilon(y_0))-\epsilon\phi_sh(y_0)+o(\epsilon) \\
		 &=\phi(x,y_0,u_\epsilon(x),v_\epsilon(y_0))+\phi_t(u(x)-u_\epsilon(x))-\epsilon\phi_sh(y_0)+o(\epsilon) \\
	  &\leq\phi_t(u(x)-u_\epsilon(x))-\epsilon\phi_sh(y_0)+o(\epsilon).
	\end{split}\]
By \eqref{e207} we have
	\[u_\epsilon(x)-u(x) \leq -\epsilon\frac{\phi_s}{\phi_t}h(y_0)+o(\epsilon).\]
	
To show $LHS\geq RHS$ we use the fact that for any such $x\in U$ there are points $y_\epsilon\in\overline V$ such that the supremum in \eqref{e219} is attained. Thus
	\[\begin{split}
		0&\geq \phi(x,y_\epsilon,u(x),v(y_\epsilon)) \\
		 &=\phi(x,y_\epsilon,u(x),v_\epsilon(y_\epsilon)-\epsilon h(y_\epsilon)) \\
		 &=\phi(x,y_\epsilon,u(x),v_\epsilon(y_\epsilon))-\epsilon\phi_sh(y_\epsilon)+o(\epsilon) \\
		 &=\phi(x,y_\epsilon,u_\epsilon(x)+u(x)-u_\epsilon(x),v_\epsilon(y_\epsilon))-\epsilon\phi_sh(y_\epsilon)+o(\epsilon) \\
		 &=\phi(x,y_\epsilon,u_\epsilon(x),v_\epsilon(y_\epsilon))+\phi_t(u(x)-u_\epsilon(x))-\epsilon\phi_sh(y_\epsilon)+o(\epsilon) \\
	     &=\phi_t(u(x)-u_\epsilon(x))-\epsilon\phi_sh(y_\epsilon)+o(\epsilon).
	\end{split}\]
Then by \eqref{e207} we have
	\[\begin{split}
		u_\epsilon(x)-u(x) &\geq -\epsilon\frac{\phi_s}{\phi_t}h(y_\epsilon)+o(\epsilon) \\
			&=  -\epsilon\frac{\phi_s}{\phi_t}h(y_0)+\epsilon\frac{\phi_s}{\phi_t}\left(h(y_0)-h(y_\epsilon)\right)+o(\epsilon).
	\end{split}\]
Since the supremum in \eqref{e205} is attained at $y_0$, we have $y_\epsilon\to y_0$ as $\epsilon\to0$, and therefore, since $h\in C(V)$,
	\[\epsilon\frac{\phi_s}{\phi_t}\left(h(y_0)-h(y_\epsilon)\right)=o(\epsilon).\]
This implies that $LHS\geq RHS$, and \eqref{e220} follows.
	
Next, since $(u,v)=(u_0,v_0)$ maximizes $I$ among all pairs in $\mathcal{K}$, we obtain
	\[\begin{split}
		0&=\lim_{\epsilon\to0}\frac{I(u_\epsilon,v_\epsilon)-I(u,v)}{\epsilon} \\
		 &=\lim_{\epsilon\to0}\int_{U\times V}\frac{F(x,y,u_\epsilon,v_\epsilon)-F(x,y,u,v)}{\epsilon}d\gamma \\
		 &=\int_{U\times V}\left\{-F_t\frac{\phi_s}{\phi_t}h(T(x))+F_s h(y)\right\}d\gamma.
	\end{split}\]
\end{proof}

\subsection{Formulation of reflector problem}
In order to formulate the near field reflector problem to an optimization problem, we need the notion of \emph{ellipsoid of revolution}, which has a special reflection property: the light rays from one focus are always reflected to the other focus.

In the polar coordinate system, an {ellipsoid of revolution} $E=E(Y,p)$ with one focus at the origin, the other focus at $Y$, and \emph{focal parameter} $p\in(0,\infty)$ can be represented as $E=\{X\rho_e(X)\,:\,X\in\mathbb{S}^n\}$ by a radial function
	\begin{equation}\label{e221}
		\rho_e(X)=\frac{p}{1-\epsilon(p)\langle X,\frac{Y}{|Y|}\rangle}
	\end{equation}
and
	\begin{equation}\label{e222}
		\epsilon(p)=\sqrt{1+\frac{p^2}{|Y|^2}}-\frac{p}{|Y|}
	\end{equation}
is the \emph{eccentricity}, see \cite{KO}. Note that any such ellipsoid is uniquely determined by $Y$ and $p$. If we regard $p=p(Y)$ as a \emph{focal function} on $\Omega^*$, we then have a family of ellipsoids.

We recall that \cite{KW}, for an \emph{admissible} reflector $\Gamma_\rho$, at each point $X\rho(X)\in\Gamma_\rho$ there exists a \emph{supporting ellipsoid}, namely, for some $Y\in\Omega^*$
	\begin{equation}\label{e223}
	\left\{\begin{array}{rl}
	  \rho(X) \!\!\!&= \frac{p(Y)}{1-\epsilon(p(Y))\langle X,\frac{Y}{|Y|}\rangle},\\
	  \rho(X')\!\!\!&\leq \frac{p(Y)}{1-\epsilon(p(Y))\langle X',\frac{Y}{|Y|}\rangle},\ \ \forall X'\in\Omega.
	\end{array}\right.
	\end{equation}
In the following context, we also say $\rho$ is admissible if $\Gamma_\rho$ is admissible. 

Next we define a set-valued mapping $T_\rho:\Omega\to\Omega^*$. For any $X\in\Omega$,
  \begin{equation}\label{e224}
    \begin{array}{rl}
     T_\rho(X)=\{Y\in\Omega^*\,:\!&\!\!Y \mbox{ is the focus of }\\ &\mbox{a supporting ellipsoid of $\Gamma_\rho$ at $X\rho(X)$}\}.
    \end{array}
  \end{equation}
Note that at any differentiable point $X$ of $\rho$, $T_\rho(X)$ is single valued and is exactly the reflection mapping.
For any subset $G\subset\Omega$, we denote $T_\rho(G)=\bigcup_{X\in G}T_\rho(X)$. Therefore, we can define a measure $\mu_\#=\mu_{\rho,g}$ in $\Omega$ such that for any Borel set $G\subset\Omega$,
  \begin{equation}\label{e225}
   \mu_\#(G)=\int_{T_\rho(G)}gd\nu.
  \end{equation}

\begin{definition}
  An admissible function $\rho$ is called a weak solution of the reflector problem if $\mu_{\rho,g}=fd\mu$ as measures, namely for any Borel set $G\subset\Omega$,
  \begin{equation}\label{e226}
   \int_Gfd\mu=\int_{T_\rho(G)}gd\nu.
  \end{equation}
\end{definition}
The above definition was introduced in \cite{KW}. Obviously an admissible smooth solution is a weak solution, in that case, the reflector $\Gamma_\rho$ is naturally an envelope of a family of confocal ellipsoids of revolution. Therefore, the radial function $\rho$ satisfies
	\begin{equation}\label{e227}
		\rho(X)=\inf_{Y\in\Omega^*}\frac{p(Y)}{1-\epsilon(p(Y))\langle X,\frac{Y}{|Y|}\rangle},\quad X\in\Omega,
	\end{equation}
and for each $Y\in\Omega^*$ the ellipsoid $E_{Y,p(Y)}$ is supporting to $\Gamma_\rho$, we also have the focal function $p$ satisfies
	\begin{equation}\label{e228}
		p(Y)=\sup_{X\in\Omega}\rho(X)\left[1-\epsilon(p(Y))\langle X,\frac{Y}{|Y|}\rangle\right],\quad Y\in\Omega^*.
	\end{equation}
Note that in \eqref{e227} for each $X\in\Omega$ the infimum is achieved at some $Y\in\Omega^*$ and in \eqref{e228} for each $Y\in\Omega^*$ the supremum is achieved at some $X\in\Omega$.

The relations \eqref{e227}--\eqref{e228} between the radial and focal functions of a reflector $\Gamma_\rho$ are analogous to the classical relations between the radial and support functions for convex bodies, for example, see \cite{Sch}. 
Inspired by that and \cite{W04}, we set $\eta=1/p$. Then the pair $(\rho,\eta)$ satisfies the following dual relation
	\begin{eqnarray}\label{e229}
		\rho(X)\!\!&=&\!\!\inf_{Y\in\Omega^*}\frac{1}{\eta(Y)\left(1-\epsilon(\eta(Y))\langle X,\frac{Y}{|Y|}\rangle\right)}, \\
		\eta(Y)\!\!&=&\!\!\inf_{X\in\Omega}\frac{1}{\rho(X)\left(1-\epsilon(\eta(Y))\langle X,\frac{Y}{|Y|}\rangle\right)}, \nonumber
	\end{eqnarray}
where $\eta$ is a Legendre type transform of $\rho$, \cite{GW}.

Similarly to \cite{W04}, we can now formulate the reflector problem to a nonlinear optimization \eqref{e201}--\eqref{e202} as follows. Set the functional
	\begin{equation}\label{e230}
	\begin{split}
		I(\rho,\eta)&=\int_{\Omega\times\Omega^*} F(X,Y,\rho,\eta) \\
			&=\int_{\Omega\times\Omega^*}f(X)\log\rho+g(Y)\left(\log\eta+\log (1-\frac{\langle X,Y\rangle}{\eta^{-1}+\sqrt{|Y|^2+\eta^{-2}}} )\right),
	\end{split}
	\end{equation}
and the constraint set
	\[\mathcal{K}=\left\{(\rho,\eta)\in C_+(\Omega)\times C_+(\Omega^*)\,:\,\phi(X,Y,\rho,\eta)\leq0\right\},\]
with the constraint function
	\begin{equation}\label{e231}
		\phi(X,Y,\rho,\eta)=\log\rho+\log\eta+\log\left(1-\epsilon(\eta(Y))\langle X,\frac{Y}{|Y|}\rangle\right).
	\end{equation}
In fact, by \eqref{e222} and $\eta=1/p$ it is easy to see that
  \begin{equation}\label{e232}
   \epsilon(\eta(Y))\langle X,\frac{Y}{|Y|}\rangle=\frac{\langle X,Y\rangle}{\eta^{-1}+\sqrt{|Y|^2+\eta^{-2}}}.
  \end{equation}

\begin{lemma}\label{l204}
 Let $(\rho,\eta)$ be a dual maximizing pair of \eqref{e230}--\eqref{e231}, and $T$ be the associated optimal mapping. Then $T=T_\rho$ at any differentiable point of $\rho$, where $T_\rho$ is the reflection mapping in \eqref{e224}.
\end{lemma}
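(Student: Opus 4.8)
The plan is to unwind the abstract description of the optimal mapping $T$ from Lemma~\ref{l202} for the concrete constraint function \eqref{e231} and to recognize it as the supporting-ellipsoid condition \eqref{e223} that defines $T_\rho$. Fix a differentiable point $X\in\Omega$ of $\rho$ and set $Y:=T(X)$. First I would record that, since $(\rho,\eta)$ is a dual maximizing pair, the first relation in \eqref{e205} together with $(\rho,\eta)\in\mathcal K$ forces
	\[\phi(X,Y,\rho(X),\eta(Y))=0,\qquad \phi(X',Y,\rho(X'),\eta(Y))\le0\ \ \text{for all }X'\in\Omega.\]
Indeed the equality is the statement that the supremum in the first line of \eqref{e205} is attained (use continuity of $\phi$ and $\eta$, compactness of $\Omega^*$, and $\phi(X,Y,t,\eta(Y))\to+\infty$ as $t\to+\infty$), while the inequality is simply membership in $\mathcal K$. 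These are exactly the relations \eqref{e216} used in the proof of Lemma~\ref{l202}, so $Y$ is indeed the value of the optimal mapping at $X$.

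Next I would substitute \eqref{e231}. Hypothesis \eqref{e105}, together with $\epsilon(\eta(Y))\in(0,1)$, guarantees $1-\epsilon(\eta(Y))\langle X,\tfrac{Y}{|Y|}\rangle>0$ on all of $\Omega\times\Omega^*$, so the logarithms in \eqref{e231} are well defined, and exponentiating the two relations above yields
	\[\rho(X)\,\eta(Y)\Bigl(1-\epsilon(\eta(Y))\langle X,\tfrac{Y}{|Y|}\rangle\Bigr)=1,\qquad \rho(X')\,\eta(Y)\Bigl(1-\epsilon(\eta(Y))\langle X',\tfrac{Y}{|Y|}\rangle\Bigr)\le1\ \ \text{for all }X'\in\Omega.\]
Writing $p(Y)=1/\eta(Y)$ and using \eqref{e222} and \eqref{e232} to identify $\epsilon(\eta(Y))$ with the classical eccentricity $\epsilon(p(Y))$, these two relations become precisely \eqref{e223}. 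Hence the confocal ellipsoid $E(Y,p(Y))$ is a supporting ellipsoid of $\Gamma_\rho$ at $X\rho(X)$, i.e.\ $Y\in T_\rho(X)$ by \eqref{e224}. Finally, since $X$ is a differentiable point of $\rho$, the set $T_\rho(X)$ is single valued (as noted just after \eqref{e224}), so $T(X)=T_\rho(X)$, which is the assertion.

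I do not expect a serious obstacle: the core point is the elementary observation that, after the substitution $p=1/\eta$ and exponentiation, the constraint $\phi\le0$ with $\phi$ as in \eqref{e231} is nothing but the ellipsoid inequality appearing in \eqref{e223}. The only steps requiring genuine care are (a) checking that the logarithms in \eqref{e231} are meaningful --- which is exactly where \eqref{e105} is used --- and (b) the bookkeeping between $\epsilon(\eta(Y))$ in the constraint function and $\epsilon(p(Y))$ in \eqref{e223}, i.e.\ the identity \eqref{e232}. One should also remark that a priori $Y\in\overline{\Omega^*}$; since $\partial\Omega^*$ has measure zero this is harmless, and in fact the first relation in \eqref{e205} shows that $\rho$ is admissible in the sense of \eqref{e223}, so $Y$ actually lies in $\Omega^*$ wherever $T_\rho$ is single valued.
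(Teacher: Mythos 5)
Your proof is correct, but it takes a genuinely different route from the paper. You work at the level of the zeroth-order characterization of the optimal mapping: $Y=T(X)$ is the point where the constraint is active, and exponentiating $\phi(X,Y,\rho(X),\eta(Y))=0$ together with $\phi(X',Y,\rho(X'),\eta(Y))\le0$ (using \eqref{e232} and $p=1/\eta$) recovers verbatim the supporting-ellipsoid conditions \eqref{e223}, so $Y\in T_\rho(X)$ by the definition \eqref{e224}; single-valuedness of $T_\rho$ at differentiable points then finishes the argument. The paper instead works with the first-order condition \eqref{e214}: it sets up local coordinates on $\mathbb{S}^n$, computes the unit normal \eqref{e242} of $\Gamma_\rho$, applies the reflection law to get the explicit formula \eqref{e245} for $T_\rho$, separately differentiates the constraint to get \eqref{e247}--\eqref{e248}, combines this with the ellipsoid relations \eqref{e249}--\eqref{e250} to obtain the implicit equation \eqref{e251} for $T(X)$, and verifies by substitution \eqref{e252} that the two expressions agree. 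Your argument is shorter and makes transparent that the constraint function \eqref{e231} was designed precisely to encode \eqref{e223}; its cost is that it leans entirely on the assertion (made after \eqref{e224}, ultimately from \cite{KW}) that $T_\rho$ is single-valued at differentiable points and coincides with the physical reflection map, whereas the paper's computation effectively re-establishes this and, more importantly, produces the explicit coordinate formulas \eqref{e245}--\eqref{e246} that are reused in Section 3 (e.g.\ in \eqref{e312}--\eqref{e313} and \eqref{e316}) to derive the Monge-Amp\`ere equation. Two small points to tighten: the attainment argument gives a priori only $Y\in\overline{\Omega^*}$, while \eqref{e224} requires $Y\in\Omega^*$ (an imprecision the paper shares via \eqref{e216}); and you should state explicitly that $1-\epsilon(\eta(Y))\langle X',Y/|Y|\rangle\ge 1-\epsilon>0$ for \emph{all} $X'\in\Omega$, which is what lets you exponentiate the inequality in the correct direction.
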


\begin{proof}
We first introduce some geometric notation. 
By restricting to a subset we may assume that $\Omega$ is in the north hemisphere. Let $X=(x,x_{n+1})$ be a smooth parameterization of $\Omega\subset\mathbb{S}^n$, where $x_{n+1}=\sqrt{1-|x|^2}$ and $x=(x_1,\cdots,x_n)$.

Denote $\partial_i=\partial/\partial x_i$, $e_i=\partial_iX$, and the metric $g_{ij}=\langle e_i,e_j\rangle$, where $\langle\, ,\rangle$ is the inner product of $\mathbb{R}^{n+1}$. By direct computations, for $i,j,k,l=1,\cdots,n$,
	\begin{eqnarray}
		&&e_i=\left(0,\cdots,1,\cdots,0,\frac{-x_i}{\sqrt{1-|x|^2}}\right),\qquad \mbox{$1$ is in the $i$th coordinate}, \label{e233} \\
		&&g_{ij}=\delta_{ij}+\frac{x_ix_j}{1-|x|^2}, \qquad		g^{ij} = \delta_{ij}-x_ix_j,	\quad\mbox{where $(g^{ij})=(g_{ij})^{-1}$}, \label{e234}
	\end{eqnarray}
and the Christoffel symbols are
	\begin{equation}\label{e235}
		\Gamma_{ij}^k=\frac{1}{2}g^{kl}\left(\partial_ig_{jl}+\partial_jg_{il}-\partial_lg_{ij}\right)=x_k\left(\delta_{ij}+\frac{x_ix_j}{1-|x|^2}\right).
	\end{equation}
Denote $e_{n+1}=-X$, the unit inner normal of $\mathbb{S}^n$ at $X$. The Gauss formula is
	\[\partial_je_i=\Gamma_{ij}^ke_k+h_{ij}e_{n+1},\]
where the second fundamental form
	\begin{equation}\label{e236}
	\begin{split}
		h_{ij} &= \delta_{ij}+\frac{x_ix_j}{1-|x|^2}\\
		       &=g_{ij}.
	\end{split}
	\end{equation}
Namely, one has that
	\begin{equation}\label{e237}
		\partial_je_i = \Gamma_{ij}^ke_k-g_{ij}X.
	\end{equation}
The above equalities \eqref{e233}--\eqref{e237} can all be obtained by basic computations.

Let $\rho$ be a function defined on $\Omega$. The tangential gradient of $\rho$ is defined by
	\begin{equation}\label{e238}
		\nabla\rho=\sum_{i,j=1}^ng^{ij}e_i\partial_j\rho.
	\end{equation}
Note that $\nabla\rho(X)\in T_X\mathbb{S}^n$, the tangent space, so $\langle\nabla\rho,X\rangle=0$. By direct calculation
	\begin{equation}\label{e239}
	\begin{split}
		\langle\nabla\rho,e_i\rangle &= \langle g^{jk}e_j\partial_k\rho,e_i\rangle \\
				&= g^{jk}g_{ij}\partial_k\rho=\delta_{ik}\partial_k\rho=\partial_i\rho,
	\end{split}
	\end{equation}
for all $1\leq i\leq n$. Let $D\rho=(\partial_1\rho,\cdots,\partial_n\rho)$ be the standard gradient of $\rho$. From \eqref{e233}, \eqref{e234} and \eqref{e238}, we have
	\begin{eqnarray}
		\nabla\rho \!\!&=&\!\! (D\rho,0)-\left(D\rho\cdot x\right)X, \label{e240} \\
		|\nabla\rho|^2 \!\!&=&\!\! \langle\nabla\rho,\nabla\rho\rangle = |D\rho|^2-\left(D\rho\cdot x\right)^2. \label{e241}
	\end{eqnarray}
	
Let $\Gamma_\rho=\{X\rho(X)\,:\,X\in\Omega\}$ be the graph of $\rho$ over $\Omega$. We claim that the unit normal of $\Gamma_\rho$ at $X\rho(X)$ is
	\begin{equation}\label{e242}
		\gamma=\frac{\nabla\rho-\rho X}{\sqrt{\rho^2+|\nabla\rho|^2}}.
	\end{equation}
Indeed, for $i=1,\cdots,n$, the tangential of $\Gamma_\rho$ at $X\rho(X)$ is
	\[\tau_i=\partial_i(X\rho(X))=\rho e_i+(\partial_i\rho) X.\]
From \eqref{e239}, for any $i=1,\cdots,n$, the following holds:
	\[\begin{split}
		\langle\tau_i,\gamma\rangle &= \frac{1}{\sqrt{\rho^2+|\nabla\rho|^2}}\langle \rho e_i+(\partial_i\rho)X,\nabla\rho-\rho X\rangle \\
			&= \frac{1}{\sqrt{\rho^2+|\nabla\rho|^2}}(\rho\partial_i\rho-\rho\partial_i\rho)=0.
	\end{split}\]
It is obvious that $|\gamma|=1$, thus $\gamma$ is the unit normal. 	

At the differentiable point $X$ of $\rho$, by \eqref{e224}, $Y=T_\rho(X)$ is the focus of the supporting ellipsoid of $\Gamma_\rho$ at $X\rho(X)$. Denote the reflected direction by $Y_r=\frac{Y-X\rho}{|Y-X\rho|}$. By \eqref{e242} and the reflection law,
  \begin{equation}\label{e243}
  \begin{split}
   Y_r &= X-2\langle X,\gamma\rangle\gamma \\
       &= \frac{2\rho\nabla\rho+(|\nabla\rho|^2-\rho^2)X}{|\nabla\rho|^2+\rho^2}.
  \end{split}
  \end{equation}
Denote the length of reflected ray
	\begin{equation}\label{e244}
		d:=|Y-X\rho|.
	\end{equation}
Hence, we have
	\begin{equation}\label{e245}
	\begin{split}
	 Y &= T_\rho(X)=  X\rho+Y_rd \\
		  &= \frac{2\rho\nabla\rho+(|\nabla\rho|^2-\rho^2)X}{|\nabla\rho|^2+\rho^2}d+X\rho,
	\end{split}
	\end{equation}	  
and
	\begin{equation}\label{e246}
		\langle X,Y\rangle =d\frac{|\nabla\rho|^2-\rho^2}{|\nabla\rho|^2+\rho^2}+\rho.
	\end{equation}	

On the other hand, by differentiating the constraint function in \eqref{e231} and the formula \eqref{e214}, we obtain
  \begin{equation}\label{e247}
   \frac{\partial_i\rho}{\rho}=\frac{\epsilon\langle e_i,Y_e\rangle}{1-\epsilon\langle X,Y_e\rangle},
  \end{equation}
where $Y_e=T(X)/|T(X)|$, $T(X)=Y$ is the optimal mapping. By noting that $e_i\perp X$, $\nabla\rho\perp X$ and $\langle\nabla\rho,e_i\rangle=\partial_i\rho$, from \eqref{e247} we have the decomposition
  \begin{equation}\label{e248}
   Y_e=\frac{1-\epsilon\langle X,Y_e\rangle}{\epsilon\rho}\nabla\rho+\langle X,Y_e\rangle X.
  \end{equation}

From \eqref{e213}, \eqref{e229} and \eqref{e231}, observe that at differentiable point $X$ of $\rho$, there exists a unique supporting ellipsoid $E$ of $\Gamma_\rho$ at $X\rho(X)$, with foci $O, Y$ and eccentricity $\epsilon$. 
Note that the sum of length $\rho=|X\rho(X)-O|$ and length $d=|Y-X\rho(X)|$ equals to the diameter of $E$, i.e.
	\begin{equation}\label{e249}
		\rho+d=\diam(E).
	\end{equation}
By the definition of eccentricity $\epsilon$,
	\begin{equation}\label{e250}
		\epsilon=\frac{|Y|}{\diam(E)}=\frac{|Y|}{\rho+d}.
	\end{equation}
Combining \eqref{e248} and \eqref{e250}, one obtains the following equation for $Y=T(X)$,	 
  \begin{equation}\label{e251}
   Y=\frac{\rho+d-\langle X,Y\rangle}{\rho}\nabla\rho+\langle X,Y\rangle X.
  \end{equation}
It then suffices to show that $Y=T_\rho(X)$ in \eqref{e245} is a solution of \eqref{e251}. In fact, by \eqref{e246} we have
  \begin{equation}\label{e252}
  \begin{split}
   T(X) &= \left(d-d\frac{|\nabla\rho|^2-\rho^2}{|\nabla\rho|^2+\rho^2}\right)\nabla\rho/\rho+X\rho+Xd\frac{|\nabla\rho|^2-\rho^2}{|\nabla\rho|^2+\rho^2} \\
       &= X\rho+\frac{2\rho\nabla\rho+(|\nabla\rho|^2-\rho^2)X}{|\nabla\rho|^2+\rho^2}d \\
       &= X\rho+Y_rd = T_\rho(X).
  \end{split}
  \end{equation}
\end{proof}

\begin{proof}[Proof of Theorem \ref{t101}]
The proof essentially follows from \cite{W04}. 
Let $u=\log\rho$, $v=\log\eta$. In order to apply Lemma \ref{l201}, we need first to verify that $F$ and $\phi$ in \eqref{e230}--\eqref{e231} satisfy the conditions \eqref{e206}--\eqref{e208}.
For the constraint function $\phi$ in \eqref{e231}, it is easy to see that $\phi_t=1>0$. By \eqref{e222} and $\eta=1/p$,
  \[\begin{split}
     \phi_s &= \eta\left(\frac{1}{\eta}-\frac{\frac{\partial\epsilon}{\partial\eta}\langle X,Y_e\rangle}{1-\epsilon\langle X,Y_e\rangle}\right) \\
	    &= 1-\frac{1}{\sqrt{1+\eta^2|Y|^2}}\frac{\epsilon\theta}{1-\epsilon\theta},
    \end{split}\]
where $\theta=\langle X,Y_e\rangle\in[-1,1)$ due to \eqref{e105}. Since $\phi_s$ is decreasing in $\theta$, we have
  \[\phi_s>1-\frac{1}{\sqrt{1+\eta^2|Y|^2}}\frac{\epsilon\theta_0}{1-\epsilon\theta_0},\]
where the constant $\theta_0<1$ depends on domains $\Omega,\Omega^*$.
Set $\tau=1/(\eta|Y|)$, $\epsilon=\sqrt{1+\tau^2}-\tau$. One has the second term in the above inequality
  \[\frac{1}{\sqrt{1+\eta^2|Y|^2}}\frac{\epsilon\theta_0}{1-\epsilon\theta_0} < \frac{\tau(\sqrt{1+\tau^2}-\tau)}{\sqrt{1+\tau^2}(1-\sqrt{1+\tau^2}+\tau)}=:h(\tau),\]
where the function $h$ is decreasing in $\tau$. Thus, by the Taylor expansion of $\sqrt{1+\tau^2}$ near $\tau=0$,
  \[h(\tau)<\lim_{\tau\to0}h(\tau)=1,\quad\mbox{for }\tau>0.\]
Hence, we otain $\phi_s>\delta_0$ and \eqref{e207} holds, for a positive constant $\delta_0$. From \eqref{e230}--\eqref{e232}, one can see that $F_t=\phi_tf(X)$ and $F_s=\phi_sg(Y)$. Since $f$ and $g$ are both positive, we have the condition \eqref{e206} satisfied. 
The condition \eqref{e208} is an equivalent to the assumption \eqref{e101}.

Therefore, from Lemma \ref{l201}, we obtain a dual maximizing pair $(\rho,\eta)\in\mathcal{K}$ in Theorem \ref{t101}. 
Then by the dual relation \eqref{e205} and \eqref{e231}, one can see that $\rho$ is admissible \eqref{e223}, and $\eta$ is the Legendre type transform of $\rho$ as in \eqref{e229}. 
From Lemma \ref{l201}, one knows that $\rho$ is Lipschitz continuous. Actually, since an admissible function has supporting ellipsoid at any point of its graph, it is semi-convex and twice differentiable almost everywhere \cite{KW}. Hence, by Lemma \ref{l204} $T_\rho=T$ a.e., where $T_\rho$ is the mapping defined in \eqref{e224}.

Next, we show that $T$ satisfies the mearsure preserving condition \eqref{e226}.
Since $F_t=\phi_tf(X)$ and $F_s=\phi_sg(Y)$, by \eqref{e207} and applying Lemma \ref{l203} to $T$, we obtain that
  \[\int_\Omega f(X)h(T(X))d\mu=\int_{\Omega^*}g(Y)h(Y)d\nu,\]
for arbitrary test functions $h\in C(\Omega^*)$.
Therefore, since $T_\rho=T$ a.e., we see that $T_\rho$ satisfies \eqref{e226}, namely $\rho$ is a weak solution of the reflector problem.
\end{proof}

\section{Derivation of equation}

We first derive the partial differential equation for the nonlinear optimization problem \eqref{e201}--\eqref{e202} in general.
Let $(u,v)$ be a dual maximizing pair of $I$. Assume that all the functions are smoothly differentiable at this stage. By a second differentiation of \eqref{e214} we obtain 
	\begin{equation}\label{e301}
	\begin{split}
		0=& \phi_{xx}+\phi_{xy}DT+2\phi_{xt}\otimes Du+(\phi_{xs}\otimes Dv)DT \\
		  & +(\phi_{yt}\otimes Du)DT+\phi_{tt}Du\otimes Du+(\phi_{ts}Dv\otimes Du)DT+\phi_tD^2u,
	\end{split}
	\end{equation}
where each side is regarded as an $n\times n$ matrix valued at $(x,y)$, $y=T(x)$.

Note that for every $x\in U$, the equality \eqref{e213} holds at point $y=T(x)$, and for any other $y'\in V$ we have
	\[\phi(x,y',u(x),v(y'))\leq0,\]
since $(u,v)\in\mathcal{K}$. Thus, at $(x,T(x))$ we have
	\[\frac{d\phi}{dy}=\phi_y+\phi_sDv=0.\]
By the assumption \eqref{e207}, $\phi_s>0$, we get
	\begin{equation}\label{e302}
		Dv=-\frac{\phi_y}{\phi_s}.
	\end{equation}

Combining \eqref{e301} and \eqref{e302}, we obtain the equation
	\begin{equation}\label{e303}
	\begin{split}
		& \left|\phi_tD^2u+\phi_{tt}Du\otimes Du+2\phi_{xt}\otimes Du+\phi_{xx}\right| \\
	   =& \left|\phi_{xy}+\phi_{xs}\otimes Dv+\phi_{yt}\otimes Du+\phi_{ts}Dv\otimes Du\right|\left|DT\right| \\
	   =& \left|\phi_{xy}-\frac{1}{\phi_s}\phi_{xs}\otimes\phi_y+\phi_{yt}\otimes Du-\frac{\phi_{ts}}{\phi_s}\phi_y\otimes Du\right|\left|DT\right|,
	\end{split}
	\end{equation}
hence by \eqref{e207},
	\begin{equation}\label{e304}
	\begin{split}
		& \left|\det\,\left[D^2u+\frac{\phi_{tt}}{\phi_t}Du\otimes Du+\frac{2}{\phi_t}\phi_{xt}\otimes Du+\frac{1}{\phi_t}\phi_{xx}\right]\right| \\
	   =& \frac{1}{\phi_t^n}\left|\det\,\left[\phi_{xy}-\frac{1}{\phi_s}\phi_{xs}\otimes\phi_y+\phi_{yt}\otimes Du-\frac{\phi_{ts}}{\phi_s}\phi_y\otimes Du\right]\right|\left|\det\,DT\right|.
	\end{split}
	\end{equation}

Equation \eqref{e304} is a second order fully nonlinear PDE of general Monge-Amp\`ere type \cite{GT}.
In the special case of optimal transportation \eqref{e203}--\eqref{e204}, equation \eqref{e304} becomes
	\begin{equation}\label{e305}
		\left|\det\,\left[D^2u-D^2_{xx}c\right]\right|=\left|\det\,D^2_{xy}c\right|\frac{\rho}{\rho^*\circ T}.
	\end{equation}
For the derivation of the optimal transportation equation \eqref{e305}, see \cite{MTW} for more.

Using the notation from the proof of Lemma \ref{l204},
we can now derive the PDE in the near field reflector problem by using the formula \eqref{e304} and constraint function \eqref{e231}. 

Denote $Y_e=Y/|Y|$, $D\rho=(\partial_1\rho,\cdots,\partial_n\rho)$ the gradient of $\rho$, and $D^2\rho=(\partial_i\partial_j\rho)$ the Hessian of $\rho$. By differentiating \eqref{e231}, 
	\begin{eqnarray*}
		&&\phi_t=\frac{1}{\rho},\qquad \phi_{tt}=-\frac{1}{\rho^2},\qquad \phi_{xt}=0,\\
		&&\phi_{x_i}=-\frac{\epsilon\langle e_i,Y_e\rangle}{1-\epsilon\langle X,Y_e\rangle},\qquad \phi_{x_ix_j}=-\frac{\epsilon\langle\partial_je_i,Y_e\rangle}{1-\epsilon\langle X,Y_e\rangle}-\frac{\epsilon^2\langle e_i,Y_e\rangle\langle e_j,Y_e\rangle}{(1-\epsilon\langle X,Y_e\rangle)^2}.
	\end{eqnarray*}
As in \eqref{e247}, at $Y=T(X)$, where $T$ is the optimal mapping in \eqref{e213}, we have
	\begin{equation}\label{e306}
		\frac{\partial_i\rho}{\rho}=\frac{\epsilon\langle e_i,Y_e\rangle}{1-\epsilon\langle X,Y_e\rangle}.
	\end{equation}
Therefore,
	\begin{equation}\label{e307}
		\phi_{x_ix_j}=-\frac{\epsilon\langle\partial_je_i,Y_e\rangle}{1-\epsilon\langle X,Y_e\rangle}-\frac{1}{\rho^2}\partial_i\rho\partial_j\rho,	
	\end{equation}	
and the LHS of equation \eqref{e304} becomes
	\begin{equation}\label{e308}
	\begin{split}
		M(\rho)&:=\left|\det\,\left[D^2\rho+\frac{\phi_{tt}}{\phi_t}D\rho\otimes D\rho+\frac{2}{\phi_t}\phi_{xt}\otimes D\rho+\frac{1}{\phi_t}\phi_{xx}\right]\right| \\
			&=\left|\det\,\left[\partial_i\partial_j\rho-\frac{2}{\rho}\partial_i\rho\partial_j\rho-\rho\frac{\epsilon\langle\partial_je_i,Y_e\rangle}{1-\epsilon\langle X,Y_e\rangle}\right]\right|.
	\end{split}
	\end{equation}

From \eqref{e233},
	\begin{equation}\label{e309}
		\partial_je_i=\left(0,\cdots,0,-\frac{\delta_{ij}}{\sqrt{1-|x|^2}}-\frac{x_ix_j}{(1-|x|^2)\sqrt{1-|x|^2}}\right).
	\end{equation}

In the {special case} $\Omega^*\subset\{y_{n+1}=0\}$,
	\[Y_e=\frac{Y}{|Y|}=(Y_{e,1},\cdots,Y_{e,n},0).\]
So, $\langle \partial_je_i,Y_e\rangle=0$ and \eqref{e308} becomes
	\[M(\rho)=\left|\det\,\left[D^2\rho-\frac{2}{\rho}D\rho\otimes D\rho\right]\right|.\]
Let $u=1/\rho$. We have the standard Monge-Amp\`ere operator as
	\[M(u)=|\det\,D^2u|.\]

In the {general case} when $\Omega^*$ is given by \eqref{e107}, let us now calculate the term $\rho\frac{\epsilon\langle\partial_je_i,Y_e\rangle}{1-\epsilon\langle X,Y_e\rangle}$ in \eqref{e308}. Let $E$ be the supporting ellipsoid of $\Gamma_\rho$ at $X\rho(X)$, with foci $O,Y$ and eccentricity $\epsilon$. Recall that we have the relation \eqref{e250}.
	
Therefore,
	\begin{equation}\label{e310}
	\begin{split}
		\rho\frac{\epsilon\langle\partial_je_i,Y_e\rangle}{1-\epsilon\langle X,Y_e\rangle} &= \frac{\rho\epsilon\langle\partial_je_i,Y\rangle}{|Y|-\epsilon\langle X,Y\rangle} \\
			&=\frac{\rho\langle\partial_je_i,Y\rangle}{\rho+d-\langle X,Y\rangle} \\
	\mbox{from \eqref{e309} }\qquad &=\frac{-\rho}{\rho+d-\langle X,Y\rangle}\left(\frac{y_{n+1}}{x_{n+1}}\right)\left(\delta_{ij}+\frac{x_ix_j}{1-|x|^2}\right).
	\end{split}
	\end{equation}
	
Combining \eqref{e246} into \eqref{e310}, we obtain
	\begin{equation}\label{e311}
		\frac{\rho\epsilon\langle\partial_je_i,Y_e\rangle}{1-\epsilon\langle X,Y_e\rangle}=-\frac{|\nabla\rho|^2+\rho^2}{2\rho d}\left(\frac{y_{n+1}}{x_{n+1}}\right)\mathcal{N}_{ij},
	\end{equation}	
where $\{\mathcal{N}_{ij}\}$ is in \eqref{e111}.
Actually, as one can see from \eqref{e236}, $\mathcal{N}_{ij}=g_{ij}=h_{ij}$ is equal to the metric and the second fundamental form under the \emph{projection coordinates} \eqref{e233}.  
	
Next, let us now calculate the length $d=|Y-X\rho|$ appearing in \eqref{e311}.	
Recall that $\nabla\rho=g^{ij}e_i\partial_j\rho$ satisfies \eqref{e240}--\eqref{e241}. 
Thus, from \eqref{e245}, we have
	\begin{equation}\label{e312}
	\begin{split}
		y_{n+1} &=\frac{d}{|\nabla\rho|^2+\rho^2}\left(-2\rho(D\rho\cdot x)x_{n+1}+(|\nabla\rho|^2-\rho^2)x_{n+1}\right)+\rho x_{n+1} \\
			&=\frac{d}{|\nabla\rho|^2+\rho^2}\left(|D\rho|^2-(\rho+D\rho\cdot x)^2\right)x_{n+1}+\rho x_{n+1}.
	\end{split}
	\end{equation}
Therefore,
	\begin{equation}\label{e313}
		d = \left(\frac{y_{n+1}}{x_{n+1}}-\rho\right)\frac{|\nabla\rho|^2+\rho^2}{|D\rho|^2-(\rho+D\rho\cdot x)^2}. 
	\end{equation}

Finally, combining \eqref{e313} into \eqref{e311} we obtain
	\begin{equation}\label{e314}
		\frac{\rho\epsilon\langle\partial_je_i,Y_e\rangle}{1-\epsilon\langle X,Y_e\rangle}=\frac{|D\rho|^2-(\rho+D\rho\cdot x)^2}{2\rho}\left(\frac{y_{n+1}}{\rho x_{n+1}-y_{n+1}}\right)\mathcal{N}_{ij}
	\end{equation}
Using the notation \eqref{e108}--\eqref{e111}, $a=|D\rho|^2-(\rho+D\rho\cdot x)^2$ and $\mathcal{N}=(\mathcal{N}_{ij})$. We get the LHS of equation \eqref{e304}
	\begin{equation}\label{e315}
	\begin{split}
		M(\rho) &= \left|\det\,\left[D^2\rho-\frac{2}{\rho}D\rho\otimes D\rho-\frac{ay_{n+1}}{2\rho(\rho x_{n+1}-y_{n+1})}\mathcal{N}\right]\right| \\
			&= \left|\det\,\left[D^2\rho-\frac{2}{\rho}D\rho\otimes D\rho-\frac{a(1-t)}{2t\rho}\mathcal{N}\right]\right|.
	\end{split}
	\end{equation}

To compute the RHS of \eqref{e304}, one can directly differentiate the constraint function $\phi$ in \eqref{e231}, but the computations are rather complicated. Instead, we recall a result in \cite{KW} in the following: the Jacobian determinant of the reflection mapping $T_\rho$ is equal to
  \begin{equation}\label{e316}
  \begin{split}
   \left|\det\,DT_\rho\right| &= 2^n\rho^{2n+1}x_{n+1}|\nabla\psi|\left|\frac{t^nb\beta}{a^{n+1}}\right|\left|\det\,\left[D^2\rho-\frac{2}{\rho}D\rho\otimes D\rho-\frac{a(1-t)}{2t\rho}\mathcal{N}\right]\right| \\
			      &= 2^n\rho^{2n+1}x_{n+1}|\nabla\psi|\left|\frac{t^nb\beta}{a^{n+1}}\right|M(\rho),
  \end{split}
  \end{equation}
where $b,\beta$ are defined in \eqref{e109}--\eqref{e110} and $\psi$ is the defining function of $\Omega^*$ in \eqref{e107}. 
Alternatively, one can obtain \eqref{e316} by differetiating the mapping $T_\rho$ in \eqref{e245}. 

On the other hand, from \eqref{e304}
  \begin{equation}\label{e317}
   |\det\,DT|={M(\rho)}{\phi_t^{n}\left|\det\,\left[\phi_{xy}-\frac{1}{\phi_s}\phi_{xs}\otimes\phi_y+\phi_{yt}\otimes Du-\frac{\phi_{ts}}{\phi_s}\phi_y\otimes Du\right]\right|^{-1}}.
  \end{equation}
By Lemma \ref{l204} and Theorem \ref{t101}, $|\det\,DT|=\left|\det\,DT_\rho\right|$. Thus
  \begin{equation}\label{e318}
   \frac{1}{\phi_t^{n}}\left|\det\left[\phi_{xy}\!-\!\frac{\phi_{xs}}{\phi_s}\otimes\phi_y\!+\!\phi_{yt}\otimes Du\!-\!\frac{\phi_{ts}}{\phi_s}\phi_y\otimes Du\right]\right|=\left|\frac{a^{n+1}}{t^nb\beta}\right|\frac{1}{2^n\rho^{2n+1}x_{n+1}|\nabla\psi|}.
  \end{equation}

Note that we projected $\Omega\subset\mathbb{S}^n$ on the $n$ dimensional space $(x_1,\cdots,x_n)$ in \eqref{e233}, $dx=\omega d\mu$, where $d\mu$ is the surface area element of $\Omega$, $\omega=\sqrt{1-|x|^2}$. By Lemma \ref{l204} and \eqref{e226},
  \begin{equation}\label{e319}
   |\det\,DT|=|\det\,DT_\rho|=\frac{f}{\omega g}.
  \end{equation}
Therefore, combining \eqref{e318}--\eqref{e319} into \eqref{e304}, we obtain the equation
  \begin{equation}\label{e320}
   M(\rho)=\left|\frac{a^{n+1}}{t^nb\beta}\right|\frac{f}{2^n\rho^{2n+1}\omega^2g|\nabla\psi|}.
  \end{equation}

This completes the proof of Theorem \ref{t102}. However, note that since we calculate the absolute value for the determinant, the matrix in $M(\rho)$, \eqref{e315} has a different sign to that in \cite{KW}.

\begin{remark}\label{r301}
Another special case of the reflector problem is the far field case \cite{W96}. Suppose a ray $X$ is reflected off by $\Gamma_\rho$ to a direction $Y$. Set the functional and constraint function in \eqref{e230}--\eqref{e231} to be
  \begin{eqnarray}
   I(\rho,\eta) \!\!&=&\!\! \int_\Omega\log\rho(X)f(X)+\int_{\Omega^*}\log\eta(Y)g(Y), \label{e321}\\
   \phi(X,Y,\rho,\eta) \!\!&=&\!\! \log\rho+\log\eta+\log(1-\langle X,Y\rangle). \label{e322}
  \end{eqnarray}
Similarly to Theorem \ref{t101}, one can show that if $(\rho,\eta)$ is a dual maximizing pair of $I$, then $\rho$ is a solution of the far field reflector problem. This formulation was previously obtained by Wang in \cite{W04}. 

The equation in the far field case can be directly obtained by using the formula \eqref{e304} and differentiating the constraint function \eqref{e322}. 
Here we remark that the far field equation is a limit case of \eqref{e315} for the near field one, \cite{KW}. 

To see this, using our notations \eqref{e108}--\eqref{e110}, 
from \eqref{e313} we have the length of reflected ray $d=|Y-X\rho|$ is equal to
  \begin{equation}\label{e323}
    d = \left(\frac{y_{n+1}-\rho x_{n+1}}{x_{n+1}}\right)\frac{|\nabla\rho|^2+\rho^2}{|D\rho|^2-(\rho+D\rho\cdot x)^2} = -t\rho\frac{b}{a}.
  \end{equation}

Let's regard the target $\Omega_r^*=\{rZ\,:\,Z\in\Omega^*_1\}$, where $r$ is sufficiently large, and $\Omega^*_1$ is a domain in the south hemisphere of $\mathbb{S}^n$. 
In this case, the defining function in \eqref{e107} will be $\psi(Z)=r^2-|Z|^2$. 
Let $g_r$ be the light distribution on $\Omega^*_r$ under the same reflector $\Gamma$.
Then when $r$ is sufficiently large, $r^ng_r\to g$, and
  \begin{eqnarray}
   \beta|\nabla\psi|\!\!&=&\!\!\frac{t|\nabla\psi|}{(Y-X\rho)\cdot\nabla\psi}\to\frac{-t}{d}=\frac{a}{\rho b}, \label{e324}\\
   \frac{r}{t}\!\!&=&\!\!\frac{|Y|}{t}\to\frac{d}{t}=-\frac{\rho b}{a}. \label{e325}
  \end{eqnarray}
Sending $r\to\infty$, from \eqref{e112} we obtain the equation for the far field case
  \begin{equation}\label{e326}
   \left|\det\,\left[D^2\rho-\frac{2}{\rho}D\rho\otimes D\rho+\frac{a}{2\rho}\mathcal{N}\right]\right|=\frac{|b|^nf}{2^n\rho^n\omega^2g}.
  \end{equation}
\end{remark}


\end{document}